\documentclass{article}

\usepackage{arxiv}

\usepackage[utf8]{inputenc} % allow utf-8 input
\usepackage[T1]{fontenc}    % use 8-bit T1 fonts
\usepackage{hyperref}       % hyperlinks
\usepackage{url}            % simple URL typesetting
\usepackage{booktabs}       % professional-quality tables
\usepackage{amsfonts}       % blackboard math symbols
\usepackage{nicefrac}       % compact symbols for 1/2, etc.
\usepackage{microtype}      % microtypography
\usepackage{lipsum}		% Can be removed after putting your text content
\usepackage{graphicx}
\usepackage{natbib}
\usepackage{doi}
\usepackage{arxiv}

\usepackage[utf8]{inputenc} % allow utf-8 input
\usepackage[T1]{fontenc}    % use 8-bit T1 fonts
\usepackage{hyperref}       % hyperlinks
\usepackage{url}            % simple URL typesetting
\usepackage{booktabs}       % professional-quality tables
\usepackage{amsfonts}       % blackboard math symbols
\usepackage{nicefrac}       % compact symbols for 1/2, etc.
\usepackage{microtype}      % microtypography
\usepackage{lipsum}		    % Can be removed after putting your text content
\usepackage{graphicx}
\usepackage{natbib}
\usepackage{doi}

\usepackage{appendix}

\usepackage{xcolor}
\RequirePackage{amsthm,amsmath,amsfonts,amssymb}
\RequirePackage{natbib}
\newcommand{\var}{\mathrm{Var}}

\newtheoremstyle{customprop}
  {\topsep}   % Space above
  {\topsep}   % Space below
  {\itshape}  % Body font
  {}          % Indent amount
  {\bfseries} % Theorem head font
  {.}         % Punctuation after theorem head
  {.5em}      % Space after theorem head
  {}          % Theorem head spec

\theoremstyle{customprop}
\newtheorem{App_prop}{Proposition}

\RequirePackage{bbold}

\usepackage{comment}
\usepackage[inline]{enumitem}
\theoremstyle{plain}

\newtheorem{theorem}{Theorem}

\newtheorem{algo}{Algorithm}%[section]
\newtheorem{proposition}{Proposition}%[section]
\theoremstyle{definition}
\newtheorem{definition}{Definition}

\newtheorem{assumption}{Assumption}

\title{A template for the \emph{arxiv} style}

\title{Bootstrap Validity in Bayesian Semi-Parametric Models  }
\renewcommand{\shorttitle}{Bootstrap validity in Bayesian semi-parametric models}

\author{ \hspace{1mm}Magid Sabbagh\\
	Department of Mathematics and Statistics\\
	McGill University\\
	Montreal, QC, Canada \\
	\texttt{magid.sabbagh@mail.mcgill.ca} \\
	\And
	\href{https://orcid.org/0000-0001-9811-7140}{\includegraphics[scale=0.06]{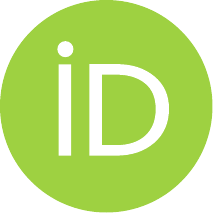}\hspace{1mm}David A. Stephens}\thanks{Corresponding author} \\
	Department of Mathematics and Statistics\\
	McGill University\\
	Montreal, QC, Canada \\
	\texttt{david.stephens@mcgill.ca} \\
}

\renewcommand{\headeright}{Technical Report}
\hypersetup{
pdftitle={A template for the arxiv style},
pdfsubject={q-bio.NC, q-bio.QM},
pdfauthor={David S.~Hippocampus, Elias D.~Striatum},
pdfkeywords={First keyword, Second keyword, More},
}

\begin{document}
\maketitle

\begin{abstract}
We discuss Bayesian inference on a low-dimensional targeted parameter in the presence of possibly highly complex nuisance components within the semi-parametric inference framework using an estimating function approach. We obtain a posterior distribution using non-parametric Bayesian methods through the Dirichlet process and the Bayesian bootstrap. We relax the commonly deployed notion of stochastic equicontinuity and develop a framework leading to posterior inference with good frequentist properties, specifically we demonstrate that the posterior distribution is asymptotically Normal and concentrates at the true value of the parameter. We emphasize the specific assumptions that are required to obtain these results, and how relaxing any of them alters the conclusions.  We verify the analytical results in simulation.

%\textcolor{red}{The abstract doesn't really describe what is in the paper - you never mention likelihood times prior, stochastic equicontinuity : or causal inference !}
\end{abstract}

% keywords can be removed
\keywords{Semi-parametric inference \and Two-Step Procedures \and Machine Learning \and Bayesian Bootstrap}

\section{Introduction}
We suppose that a targeted parameter  $\theta_0 \in \Theta \subseteq \mathbb{R}^p$ and a nuisance parameter $h_0 \in \mathcal{H}$ satisfy the moment restriction $\mathbb{E}m(O;\theta_0,h_0)=0,$ where $O$ has distribution $P_O$. If $O_1,\ldots,O_N$ are i.i.d. with distribution $P_0$ and $h_0$ is known, it is possible to obtain a valid posterior distribution for $\theta_0$ by solving the weighted estimating equation
\begin{equation}\label{eq:BBmoment}
\sum_{i=1}^{N} w_{iN}m(O_i;\theta,h_0)=0,
\end{equation}
with weights drawn from the model $(w_{1N},\ldots,w_{NN}) \sim \text{Dir}(1,\ldots,1)$ independent of $O_1,\ldots,O_N$, which is known as the Bayesian bootstrap; see \citet{Rubin:1981}, \citet{Ghosal/VanderVaart:2017} and \citet{Lyddon/etal:2019}. The Bayesian justification for this procedure comes from the fact that the targeted parameter is a functional of $P_0$, and therefore the posterior distribution for $\theta_0$ is merely the probability distribution for the functional derived directly from the posterior for $P_0$; the Bayesian bootstrap arises as a limiting case of the standard nonparametric inference for $P_0$ based on the Dirichlet process.  Specifically, for the moment constraint
\[
Em(O;\theta_0,h_0)\equiv \int m(o;\theta_0,h_0) dP_0(o) = 0
\]
the posterior uncertainty for $\theta_0$ can be obtained from the Dirichlet process posterior for $P_0$.  A Dirichlet process prior with prior measure hyperparameter $\alpha_0$, $\mathcal{DP}(\alpha_0)$, yields a posterior which is $\mathcal{DP}(\alpha_N)$; this is a distribution on the space of discrete distributions, and random distributions drawn from $\mathcal{DP}(\alpha_N)$ have the stochastic representation
\[
V_N Q_0 + (1-V_N) P_N^w
\]
%\textcolor{blue}{Should it be $V_n \mathcal{DP}(Q_0)+(1-V_n)P_n^{w}$ as $P_n^{w}$ is a process}
where
\begin{itemize}
    \item $V_N \sim Beta(|\alpha_0|,N)$
    \item $Q_0$ is a random discrete measure drawn from $\mathcal{DP}(\alpha_0)$
    \item $P_N^w$ is the random discrete measure concentrating on the observed $o_1,\ldots,o_N$ with random exchangeable weights $(w_{1N},\ldots,w_{NN})$.
\end{itemize}

Therefore to obtain a posterior sample for $\theta_0$ (assuming $h_0$ is known), we solve
\begin{equation}\label{eq:DPmoment}
v_N \sum_{j=1}^\infty w_j^\ast m(o_j^\ast;\theta,h_0) + (1-v_N) \sum_{i=1}^N w_{iN} m(o_i;\theta,h_0) = 0
\end{equation}
where $v_N$ is a draw from $Beta(|\alpha_0|,N)$ and $\{(w_j^\ast,o_j^\ast),j=1,2,\ldots\}$ constitute a draw from prior measure $\mathcal{DP}(\alpha_0)$ that can be obtained using standard algorithms such as stick-breaking, and $(w_{1N},\ldots,w_{NN})$ are Dirichlet weights as defined above.   Under the Dirichlet process assumptions, this method produces an exact sample from the posterior for $\theta_0$, and therefore this is a fully Bayesian (Monte Carlo) approach to inference.

Despite the fact that we have an exact calculation for finite $N$, much of our focus in this paper will be on frequentist properties of posterior when $N$ is large, and in the limit as $N \rightarrow \infty$.  When $|\alpha_0|$ is small, the infinite sum can be truncated as the stick-breaking weights quickly become zero to machine accuracy. Further, from \eqref{eq:DPmoment}, it is evident that under mild conditions on $m$, the first sum is negligible as $N \longrightarrow \infty$ (as the the posterior concentrates on the observed data) and so our asymptotic results may be derived using \eqref{eq:BBmoment} rather than the more general \eqref{eq:DPmoment}.  In many cases related to this calculation, we will be able to note a Bayesian/frequentist duality, in that Bayesian posterior is asymptotically identified by the frequentist sampling distribution.  We contend that the Bayesian interpretation is in all cases preferable as it delivers the desired conditional probability statement for the targeted unknown parameter, rather than an unconditional (repeated sampling) statement that does not facilitate any meaningful representation of uncertainty based on the actual data observed.

Against this background, this paper studies the problem of making inference about $\theta_0$ using the Dirichlet process posterior/Bayesian bootstrap when $h_0$ is unknown and also needs to be estimated.  In particular, we study the behaviour of the resulting posterior under minimal conditions. We focus on the Bayesian bootstrap for illustration, but note that the weights for which the presented results hold encompass a more general family than the standard Dirichlet weights; see \citet{Praestgaard/Wellner:1993}, \citet{Sabbagh/Stephens:2026,SSb2026} for more details.  In Section \ref{sec:BC}, we state the core properties of frequentist and Bayesian semi-parametric strategies that are facilitated by general results on the bootstrap in the case where the nuisance parameter is known, or estimated using a parametric model, or subject to certain simplifying assumptions.  Section \ref{sec:results} tackles the case when the nuisance parameter is highly complex where standard simplifying assumptions do not apply. Specifically, we drop the restrictions on the complexity of the nuisance space and use the cross-fitting approach and algorithms from \citet{Chernozhukov/etal:2018} in order to relate a Bayesian posterior distribution computed using the Bayesian bootstrap estimator to properties of a frequentist counterpart.  We provide results establishing the required asymptotic behaviour.  Section \ref{sec:Sims} verifies the relevance of the theoretical results in finite sample using simulations in the partially linear model.

\section{Bootstrap consistency under simplifying assumptions}
\label{sec:BC}

When $h_0$ is known, the following result \citep{Kosorok:2008} enables us to assert the validity of the bootstrap for such procedures (specifically, consistency and asymptotic normality of the frequentist and Bayesian quantities), which we term \textit{bootstrap consistency}:
\begin{theorem}\label{thmllbroot}
    Let $\hat{\theta}_n$ and $\hat{\theta}_{n,BB}$ satisfy 
    \[
    \sum\limits_{i=1}^n m(O_i;\hat{\theta}_n,h_0)=0 \qquad \text{and} \qquad  \sum\limits_{i=1}^n w_{in} m(O_i;\hat{\theta}_{n,BB},h_0)=0
    \]
    respectively.  Then, under regularity conditions,
\[
   \sqrt{n} ( \hat{\theta}_n-\theta_0 ) 
 \qquad \text{and} \qquad   \sqrt{n} (\hat{\theta}_{n,BB}-\hat{\theta}_n)|O_1,\ldots O_n
   \]
   converge in distribution as $n \to \infty$ to random variables having a $\mathcal{N}(0, \Sigma)$ distribution, where 
   \[
   \Sigma= \left[\mathbb{E}_{P_O}\left\{ \frac{\partial{m(O;\theta_0,h_0)}}{\partial \theta^\top}\right\}\right]^{-1} \mathbb{E}_{P_O}\left\{ m(O;\theta_0,h_0)m(O;\theta_0,h_0)^\top \right\}\left[\mathbb{E}_{P_O}\left\{ \frac{\partial{m(O;\theta_0,h_0)}}{\partial \theta^\top}\right\}\right]^{-\top}
   \]
\end{theorem}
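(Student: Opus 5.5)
We plan to reduce both statements to a single Z-estimator argument, applied once with the empirical measure $\mathbb{P}_n$ and once with the weighted measure $\mathbb{P}_n^w=\sum_i w_{in}\delta_{O_i}$.
We take the regularity conditions to be these. First, $\theta\mapsto m(o;\theta,h_0)$ is differentiable, and $\{m(\cdot;\theta,h_0):\theta\in\Theta\}$ is a $P_0$-Donsker class with square-integrable envelope. Second, the map $\Psi(\theta)=\mathbb{E}_{P_O}m(O;\theta,h_0)$ has a well-separated unique zero at $\theta_0$ and is Fréchet differentiable there, with invertible derivative $V=\mathbb{E}_{P_O}\{\partial m(O;\theta_0,h_0)/\partial\theta^\top\}$. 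Third, $\theta\mapsto m(\cdot;\theta,h_0)$ is $L_2(P_0)$-continuous at $\theta_0$.
Write $\mathbb{G}_n=\sqrt n(\mathbb{P}_n-P_0)$ and $\mathbb{G}_n^w=\sqrt n(\mathbb{P}_n^w-\mathbb{P}_n)$.

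\emph{Frequentist part.} Since a Donsker class is Glivenko--Cantelli, $\sup_\theta|\mathbb{P}_n m(\cdot;\theta,h_0)-\Psi(\theta)|\to0$ almost surely. Well-separation of the zero then gives $\hat\theta_n\to\theta_0$ in probability.
Next we use the estimating equation $0=\mathbb{P}_n m(\cdot;\hat\theta_n,h_0)$ and decompose
\[
0=\mathbb{G}_n m(\cdot;\hat\theta_n,h_0)+\sqrt n\,\Psi(\hat\theta_n).
\]
Asymptotic equicontinuity of the Donsker class, combined with $L_2$-continuity and consistency, shows that $\mathbb{G}_n m(\cdot;\hat\theta_n,h_0)-\mathbb{G}_n m(\cdot;\theta_0,h_0)=o_P(1)$. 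Differentiability of $\Psi$ gives $\sqrt n\,\Psi(\hat\theta_n)=V\sqrt n(\hat\theta_n-\theta_0)+o_P(1+\sqrt n\|\hat\theta_n-\theta_0\|)$.
Invertibility of $V$ first yields $\sqrt n$-consistency, and then
\[
\sqrt n(\hat\theta_n-\theta_0)=-V^{-1}\mathbb{G}_n m(\cdot;\theta_0,h_0)+o_P(1).
\]
The multivariate CLT applied to the right-hand side gives the limit $\mathcal N(0,\Sigma)$.

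\emph{Bayesian bootstrap part.} We represent the Dirichlet weights as $w_{in}=\xi_i/\sum_j\xi_j$ with $\xi_i$ i.i.d.\ $\mathrm{Exp}(1)$. Then $\sqrt n(\mathbb{P}_n^w-\mathbb{P}_n)=\bar\xi^{-1}\,n^{-1/2}\sum_i(\xi_i-\bar\xi)\delta_{O_i}$.
Since $\xi$ has mean $1$, variance $1$ and $\|\xi\|_{2,1}<\infty$, the multiplier (exchangeable) bootstrap theorem of Praestgaard--Wellner (as in \citet{Kosorok:2008}) applies. It gives that $\mathbb{G}_n^w$ converges conditionally in distribution to the same Brownian bridge $\mathbb{G}$, in outer probability, over the Donsker class. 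It also gives a conditional Glivenko--Cantelli statement and conditional asymptotic equicontinuity.

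We then repeat the argument of the frequentist part with $\mathbb{P}_n^w$ in place of $\mathbb{P}_n$. This gives $\hat\theta_{n,BB}\to\theta_0$ conditionally in probability and
\[
\sqrt n(\hat\theta_{n,BB}-\theta_0)=-V^{-1}\bigl(\mathbb{G}_n^w+\mathbb{G}_n\bigr)m(\cdot;\theta_0,h_0)+o_{P}(1).
\]
Subtracting the expansion for $\hat\theta_n$ leaves $\sqrt n(\hat\theta_{n,BB}-\hat\theta_n)=-V^{-1}\mathbb{G}_n^w m(\cdot;\theta_0,h_0)+o_P(1)$. The conditional CLT for $\mathbb{G}_n^w$ then delivers the conditional $\mathcal N(0,\Sigma)$ limit, in the sense that the bounded Lipschitz distance between the two laws tends to $0$ in outer probability.

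\emph{Main obstacle.} The delicate step is controlling remainders conditionally on the data. We must show that the $o_P(1)$ terms in the bootstrap expansion are negligible in $P_W$-probability, in outer $P_O$-probability, and not merely jointly. This needs the conditional asymptotic equicontinuity of $\mathbb{G}_n^w$ evaluated at the random point $\hat\theta_{n,BB}$. We plan to obtain it from the unconditional equicontinuity of the multiplier process, then pass to the conditional statement by Markov's inequality applied to $P_W$-probabilities, using the argument of Kosorok's Theorem 10.16 on bootstrap Z-estimators.
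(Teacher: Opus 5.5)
Your proposal is correct and takes essentially the same route as the paper. The paper gives no proof of its own and simply cites \citet{Kosorok:2008}, whose Z-estimator master theorem with exchangeable (Bayesian/multiplier) bootstrap weights under Donsker, well-separation and differentiability conditions is exactly what you reconstruct, including the standard Markov-inequality step that turns unconditional $o_P(1)$ remainders into conditional ones.
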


Thus when $h_0$ is known, inference proceeds in a standard fashion, and the theorem confirms the desirable properties from both viewpoints. 

When the nuisance parameter $h_0$ needs to be estimated, and estimation is performed using a parametric model independent of $\theta$, inference for $\theta$ proceeds under mild conditions in a relatively straightforward fashion.  \citet{Stephens/etal:2022} introduce the Linked Bayesian Bootstrap algorithm that achieves bootstrap consistency; see \citet{Sabbagh/Stephens:2026} for technical details concerning the Linked Bayesian Bootstrap and its consequences. When $h_0$ is modeled non-parametrically but is known to lie in a low-complexity space (specifically, Donsker classes), \citet{SSb2026} establish this duality extending arguments of \citet{Newey:1994} for Neyman orthogonal scores (see \cite{Chernozhukov/etal:2018} and Section \ref{sec:results}). \citet{SSb2026} also notes that posterior convergence can be achieved without the score being Neyman orthogonal. In fact, in many interesting examples, it can be shown that it is the properties of the frequentist estimator that dictate the Bayesian/frequentist duality as the posterior distribution is well-behaved.

  %%% Uncomment this line and comment out the ``thebibliography'' section below to use the external .bib file (using bibtex) .

%%% Uncomment this section and comment out the \bibliography{references} line above to use inline references.
% \begin{thebibliography}{1}

% 	\bibitem{kour2014real}
% 	George Kour and Raid Saabne.
% 	\newblock Real-time segmentation of on-line handwritten arabic script.
% 	\newblock In {\em Frontiers in Handwriting Recognition (ICFHR), 2014 14th
% 			International Conference on}, pages 417--422. IEEE, 2014.

% 	\bibitem{kour2014fast}
% 	George Kour and Raid Saabne.
% 	\newblock Fast classification of handwritten on-line arabic characters.
% 	\newblock In {\em Soft Computing and Pattern Recognition (SoCPaR), 2014 6th
% 			International Conference of}, pages 312--318. IEEE, 2014.

% 	\bibitem{hadash2018estimate}
% 	Guy Hadash, Einat Kermany, Boaz Carmeli, Ofer Lavi, George Kour, and Alon
% 	Jacovi.
% 	\newblock Estimate and replace: A novel approach to integrating deep neural
% 	networks with existing applications.
% 	\newblock {\em arXiv preprint arXiv:1804.09028}, 2018.

% \end{thebibliography}

\section{Bootstrap consistency with an unknown nuisance parameter}
\label{sec:results}

%\subsection{Assumptions}
In the remaining parts of the paper, we suppose that the targeted parameter space $\Theta$ is a subset of $\mathbb{R}^p$ and that the nuisance space  $\mathcal{H}$ is a convex subset of a normed space. For a vector $v \in \mathbb{R}^p$, we write $\Vert v \Vert_{p,2}$ for the Euclidean norm of $v$ and for $h \in \mathcal{H},$ we write $ \Vert h \Vert_{\mathcal{H}}$ for the norm of $h$. The true values $\theta_0 \in \Theta$ of  $\theta $ and $h_0 \in \mathcal{H}$ of $h$ satisfy the moment restriction $E_{P_O}\left\{m(O;\theta_0,h_0)\right\}=0$, for some score function $m : \mathcal{O} \times \Theta \times \mathcal{H} \to \mathbb{R}^p$.

\subsection{Neyman orthogonality} 
\label{sec:NO}
A key concept in the derivation of semi-parametric estimators and related inference is that of orthogonality (or \textit{Neyman orthogonality}) of score functions:
\begin{definition}{\label{NeymanOrth}}
For estimating/score function $m$, let $f_{h}(t)=E_{P_O} m\left\{O;\theta_0,h_0+t(h-h_0)\right\}$ for all $h \in \mathcal{H}$.
       Then $m$ is \textit{orthogonal} with respect to $h$ if $f_{h}^\prime(0)=0$, 
        where $f^\prime$ denotes the first derivative of $f$ with respect to its argument.
\end{definition}

\begin{comment}
\textcolor{blue}{In the example, $h_0$ has two components $(k_0,e_0)$ -- how do we define $t$ in that case ?  Is $t$ still scalar ?  Do we need partial derivatives ?}
\textcolor{red}{Yes, $t$ is still a scalar. We have examples in \citet{SSb2026} on how to calculate this.}

\textcolor{blue}{So maybe that needs to be noted here ?}
\textcolor{red}{Yes, sure. We can maybe refer to an example, and show the calculation for the simulation ? And here we can add that $f:[0,1] \to \mathbb{R}^p$}
\end{comment}

The concept of Neyman orthogonality is of crucial importance in econometrics and statistics. \citet{Newey:1994} uses this concept in order to limit the effect of non-parametrically estimating the nuisance parameter in carrying frequentist inference of the parameter of interest. Given a score, it is rather straightforward to check whether it satisfies the Neyman orthogonality property by simply verifying that the property in Definition \ref{NeymanOrth} holds. However, constructing Neyman orthogonal scores from non-orthogonal scores can be more complex. \citet{Chernozhukov/etal:2018}  show how to obtain Neyman orthogonal scores in different cases, whereas \citet{Kennedy:2023} relates the Neyman orthogonal score to the efficient influence functions of a parameter $\theta_0 = -\mathbb{E}B(O;h_0)$. 

In this setting, assuming orthogonality of the score function used for estimation, we seek an approach that yields a well-behaved posterior distribution under minimal conditions on $h_0$ and its estimation, such as might arise when using flexible methods, or if the data lie in high dimension. Our approach is based on sample-splitting and cross-fitting. We establish a duality between the (frequentist) sampling distribution computed using sample-splitting and the (Bayesian) posterior distribution of the targeted parameter, and in order to do that we must extend the sample-splitting technique to be used with bootstrap weights;  we provide the necessary theoretical extensions.

\subsection{Relaxation of stochastic equicontinuity} It is useful to elaborate on the difference between the usefulness of the stochastic equicontinuity property of estimators belonging to a Donsker class and the method of sample-splitting and cross-fitting. Suppose that $\hat{f}$ is an estimator of $f$ based on an i.i.d sample $O_1,\ldots,O_n$  and suppose that we wish to study the properties of
 \begin{equation}\label{eq:Dterm}
 \sqrt{n} \displaystyle{\left\{ \frac{1}{n} \sum\limits_{i=1}^n \hat{f}(O_i)-\mathbb{E}\hat{f}(O) \right\}},
 \end{equation}
 where $\hat{f}$ belongs to a Donsker class (with probability 1, or high probability). The stochastic equicontinuity property tells us that if $\hat{f}$ is close enough to $f$ (so that the variance of $\hat{f}(O)-f(O)$ goes to 0 in probability), 
 then
 \[
 \sqrt{n} \displaystyle{\left\{ \frac{1}{n}\sum\limits_{i=1}^n \hat{f}(O_i)-\mathbb{E}\hat{f}(O) \right\}}- \sqrt{n} \displaystyle{\left\{ \frac{1}{n}\sum\limits_{i=1}^n {f}(O_i)-\mathbb{E}{f}(O) \right\}} \to 0
 \]     
in probability as $n \to \infty$. This difference, which happens to be  asymptotically negligible, is omnipresent in the literature and helps simplifying the theoretical arguments. In fact, without the Donsker property, terms such as \eqref{eq:Dterm} may be difficult to study because of the dependence present there. One can interpret the Donsker assumptions as assumptions on the smoothness of the estimator and the complexity of the space in which it lies. We refer to \citet{Chernozhukov/etal:2018}, \citet{Kennedy:2016}, and \citet{VanderVaart:1996} for examples on Donsker classes and their properties. 

Whenever the Donsker property fails to hold, one may resort to sample-splitting and cross-fitting to establish the properties of terms that are studied using stochastic equicontinuity. The benefit of sample-splitting as a procedure is that it is theoretically simple to study with reasonably minor conditions. Rather than imposing complexity restrictions on the nuisance parameter space, sample-splitting requires  consistency of the nuisance estimator with respect to a criterion, the root mean-squared error for example. \citet{Zheng/vanderLaan:2010}, \citet{Robins/etal:2008} and \citet{Chernozhukov/etal:2018} have used sample-splitting in different contexts, targeted MLE, influence functions and causal estimation, respectively. Their main goal is to overcome the bias and overfitting induced by fitting highly complex nuisance estimators by splitting the data into training and testing sets. The mathematical tools used in sample-splitting turn out to be less sophisticated than the relevant notions in empirical processes and consist mainly in conditioning on the training set and exploiting the independence of the training and testing sets. This conditioning renders the estimator obtained through the training set and plugged-in in the testing set non-stochastic.

\subsection{Sample-splitting with bootstrap resampling}{\label{SSalgo}} 

Suppose that $O_1,\ldots,O_{2n}$ are i.i.d random variables and assume for simplicity that $I_1=\left\{1,\ldots,n \right\}$ and $I_2=\left\{n+1, \ldots,2n \right\}$. First, we train the model on $I_2$ to obtain an estimator $\hat{f}$ of a function $f$ and then consider $\hat{f}(O_i)$ for $i \in I_1$. The sum of such terms in \eqref{eq:Dterm} does not consist of i.i.d terms, but conditioning on $I_2$ renders the terms conditionally i.i.d. terms. Conditioning on $I_2$ and using the laws of total expectation and variance will enable us to study terms of the form
\[
\sqrt{n} \displaystyle{\left\{ \frac{1}{n}\sum\limits_{i=1}^n \hat{f}(O_i)-\mathbb{E}\hat{f}(O) \right\}}-\sqrt{n} \displaystyle{\left\{ \frac{1}{n}\sum\limits_{i=1}^n {f}(X_i)-\mathbb{E}{f}(X_i) \right\}},
\]
 and show that they tend to zero in probability. 
 
The basic sample-splitting algorithm for estimation in the presence of a nuisance parameter \citep{Chernozhukov/etal:2018} proceeds as follows. Let $\hat{h}_2$ be an estimator of $h_0$ based on $I_2$ and let 
 $\hat{\theta}_{1,n}$ and $\hat{\theta}_{1,n,BB}$ satisfy the following moment conditions
\begin{align*}
\frac{1}{n}\sum\limits_{i\in I_1} m(O_i;\hat{\theta}_{1,n},\hat{h}_2)&\equiv\frac{1}{n}\sum\limits_{i=1}^n m(O_i;\hat{\theta}_{1,n},\hat{h}_2)=0
\end{align*}
and for the Bayesian bootstrap
\begin{align*}
\sum\limits_{i \in I_1} w_{in}m(O_i;\hat{\theta}_{1,n,BB},\hat{h}_2)
&\equiv \sum\limits_{i=1}^n w_{in}m(O_i;\hat{\theta}_{1,n,BB},\hat{h}_2)=0.
\end{align*}
Similarly, let $\hat{h}_1$ be an estimator of $h_0$ based on $I_1$ and let 
 $\hat{\theta}_{2,n}$ and $\hat{\theta}_{2,n,BB}$ satisfy the following 
\begin{align*}
\frac{1}{n}\sum\limits_{i \in I_2}m(O_i,\hat{\theta}_{2,n},\hat{h}_1) &\equiv\frac{1}{n}\sum\limits_{i=1}^{n} m(O_{n+i};\hat{\theta}_{2,n},\hat{h}_1)=0
\end{align*}
and 
\begin{align*}
\sum\limits_{i \in I_2}w_{in}m(O_i,\hat{\theta}_{2,n,BB},\hat{h}_1)&\equiv\sum\limits_{i=1}^{n} w_{in}m(O_{n+i};\hat{\theta}_{2,n,BB},\hat{h}_1)=0.
\end{align*}
We construct estimators
$\hat{\theta}_{n}$ and $\hat{\theta}_{n,BB}$ of $\theta_0$ as
\[
\hat{\theta}_n=\frac{1}{2} \left(\hat{\theta}_{1,n}+\hat{\theta}_{2,n} \right)
\qquad \text{and} \qquad
\hat{\theta}_{n,BB}=\frac{1}{2} \left(\hat{\theta}_{1,n,BB}+\hat{\theta}_{2,n,BB}\right).
\]
These estimators, and similar estimators derived using more versions of the algorithm, will be used for inference. Our aim is to show that the distributions of 
\[
\sqrt{N}\left(\hat{\theta}_n-\theta_0\right)
\quad \text{ and } \quad 
\sqrt{N}\left(\hat{\theta}_{n,BB}-\hat{\theta}_n\right) \mid O_1,\ldots,O_N 
\]
are identical.  Note that the first of these quantities is a frequentist quantity whose randomness arises from $\hat{\theta}_n$, whereas the second is a conditional (Bayesian) quantity with $\hat{\theta}_n$ fixed by the conditioning, whose randomness arises due to the bootstrap weights.

The method of estimation used for $h_0$ is not specified here, but estimation could be implemented using Bayesian (such as BART) or frequentist methods; our results are independent of the method of estimation of the nuisance parameter, provided certain regularity conditions are met, as described in Section \ref{SSResults}. The focus of this report is the relaxation of the stochastic equicontinuity/Donsker assumptions presented in \citet{SSb2026}.

\subsection{Results for Linear Scores}{\label{SSResults}}
Suppose that $m$ is a linear score in $\theta$ with
\[
m(O;\theta,h)=A(O;h)\theta+B(O;h)
\]
where $A: \mathcal{O} \times \mathcal{H} \mapsto \mathbb{R}^{p \times p}$ and $B: \mathcal{O} \times \mathcal{H} \mapsto \mathbb{R}^{p}$.   This special case is of practical interest: for example, in causal inference many scores satisfied by causal quantities are linear scores. For example, the doubly robust estimators of the average treatment effect (ATE) and the average treatment effect on the treated (ATT) can be derived from Neyman orthogonal scores that are linear in the parameter of interest. We suppose that $\hat{h}_1$ and $\hat{h}_2$, defined previously in Section \ref{SSalgo}, consistently estimate $h_0$, that is, they belong to $H_N$, where $\left\{H_i\right\}_{i=1}^\infty$ are shrinking neighborhoods around $h_0$.

We establish the properties of $\sqrt{n}(\hat{\theta}_{1,n,BB}-{\theta}_{0})$, $\sqrt{n} (\hat{\theta}_{1,n}-{\theta}_{0})$ and of $\sqrt{n}(\hat{\theta}_{1,n,BB}-\hat{\theta}_{n})$. We use the following expansions: write
\begin{align*}
    0&=\sum\limits_{i=1}^n m(O_i,\theta_0,\hat{h}_2)+\left\{\sum\limits_{i=1}^n A(O_i,\hat{h}_2)\right\} \left(\hat{\theta}_{1,n}-{\theta}_{0}\right)\\
   0&=\sum\limits_{i=1}^n w_{in}m(O_i,\theta_0,\hat{h}_2)+\left\{\sum\limits_{i=1}^n w_{in}A(O_i,\hat{h}_2)\right\} \left(\hat{\theta}_{1,n,BB}-{\theta}_{0}\right).
\end{align*}
%\textcolor{red}{Is there something missing here ?: Added!}

We further expand 
\begin{align*}
\sqrt{n}\sum\limits_{i=1}^n & w_{in} m(O_i;\theta_0,\hat{h}_2)\\
& =\sqrt{n}\sum\limits_{i=1}^n w_{in}m(O_i;\theta_0,h_0)
+\sqrt{n} \sum\limits_{i=1}^n \left(w_{in}-\frac{1}{n}\right)
\left\{m\left(O_i;\theta_0,\hat{h}_2\right)-m\left(O_i;\theta_0,h_0\right)\right\}\\
&\ + \sqrt{n}\left[ \frac{1}{n}\sum\limits_{i=1}^n\left\{m(O_i;\theta_0,\hat{h}_2)-m(O_i;\theta_0,h_0)\right\}-\mathbb{E}_{P_O}\left\{m\left(O;\theta_0,\hat{h}_2\right)-m(O;\theta_0,h_0)\right\}\right]\\
&\ + \sqrt{n}\mathbb{E}_{P_O}\left\{m\left(O;\theta_0,\hat{h}_2\right)\right\}
\end{align*}
By setting the weights $w_{in}=1/n$ we obtain an expansion of $n^{-1/2}\sum\limits_{i=1}^n m(O_i;\theta_0,\hat{h}_2)$, an expansion used by \citet{Andrews:1994a} and \citet{Chernozhukov/etal:2018}.

We make the following assumptions and explain how each of these assumptions is used in establishing the main results. We suppose that the $p \times p$ matrix $M_{\theta_0}:=\mathbb{E} A(O;h_0)$ is invertible. Technical details and modified conditions can be found in the supplementary material.
 \begin{assumption}{\label{AssonA1}}
         $\sup\limits_{h \in H_N}  \left\Vert  \mathbb{E} A\left(O;h\right)-\mathbb{E}A\left(O;h_0\right) \right\Vert_{p,2} \leq a_N$, where $(a_N)_{N=1}^{\infty}$ is a sequence of positive numbers converging to zero.
\end{assumption}
\begin{assumption}{\label{AssonA2}}
         $\sup\limits_{h \in H_N}   \mathbb{E} \left\{\Vert A(O;h) \Vert_{p,2}^2\right\}  \leq C$ for some positive constant $C$.
\end{assumption}
\begin{assumption}{\label{AssonVarm}}
         $\displaystyle{\sup\limits_{h \in H_N} \mathbb{E}\left\{\Vert m\left(O;\theta_0,h\right) -m\left(O;\theta_0,h_0\right)\Vert_{p,2}^2\right\} \leq \delta_N^2}$, where $(\delta_N)_{N=1}^{\infty}$ is a sequence of positive numbers converging to zero.
\end{assumption}
\begin{assumption}{\label{Assonf}}
  Let  $f_h(t)= \mathbb{E} m\left\{O;\theta_0, h_0+t(h-h_0)\right\}$, for $h \in H_N $.
 We suppose that $f$ is twice differentiable on $(0,1)$ and that $f_h^\prime(0)=0$ and \[
 \displaystyle{ \sqrt{n}\int_0^1  \Vert f_h^{\prime \prime}(t) \Vert } dt \to 0\]
 as $n \to \infty$, uniformly in $h \in H_N$.  This implies that
 \[
 \sqrt{n} \mathbb{E}_{P_O}m(O;\theta_0,h) \longrightarrow 0
 \]
 uniformly in $h$. 
\end{assumption}

\begin{comment}
\textcolor{blue}{It seems like 3. and 4. are sort of continuity assumptions, is that correct ?  Essentially, are they really weaker than Donsker-type assumptions ?}
\textcolor{red}{ A3: Yes indeed is weaker. Donsker + these assumptions imply that we can use stochastic equicontinuity. Here we are using A3+ Sample Splitting. They depend more on the properites of $m$ and the way $h$ is estimated. \citet{Chernozhukov/etal:2018} has many examples at the end. A4 is more a second derivative bound. No stochastic equicontinuity property is required. Used in \citet{Newey:1994} and \citet{Chernozhukov/etal:2018}}

\textcolor{blue}{Yes, of course, I understand all that: my question is whether it really is weaker, or just different.   I see that there is no explicit Donsker requirement here; but the thing that concerns me is this requirement for uniform 'regular' behaviour on a shrinking $H_N$ -- are we sure that there is not something hidden in there ?  For example, in A3: this assumption means that for the estimator of h the variation is bounded by something that goes to zero; so not Donsker, but still quite strong, no ?}
\end{comment}

\noindent \textbf{Notes:}
\begin{enumerate}
  \item  Assumptions \ref{AssonA1} and \ref{AssonA2} enable us to establish that 
  \[
  \sum\limits_{i=1}^n w_{in}A(O_i;\hat{h}_2) \quad \text{ and } \quad \frac{1}{n}\sum\limits_{i=1}^nA(O_i,\hat{h}_2) 
  \]
  converge unconditionally to $\mathbb{E}A(O;h_0)$. In general, the use of sample-splitting to ensure the convergence in probability of these terms is not necessary when stronger yet reasonable conditions are imposed. 
  \item  Assumption \ref{AssonVarm} along with sample-splitting enables us to establish that the empirical process term 
   \[
\sqrt{n} \left[\frac{1}{n}\sum\limits_{i=1}^n\left\{m(O_i;\theta_0,\hat{h}_2)-m(O_i;\theta_0,h_0)\right\}-\mathbb{E}_{P_O}\left\{m\left(O;\theta_0,\hat{h}_2\right)-m(O;\theta_0,h_0)\right\}\right]
\]
and bootstrapped empirical process term
\[
\sqrt{n} \sum\limits_{i=1}^n \left(w_{in}-\frac{1}{n}\right)
\left\{m\left(O_i;\theta_0,\hat{h}_2\right)-m\left(O_i;\theta_0,h_0\right)\right\},
\]
  converge unconditionally to zero in probability. Under the classical Donsker assumption, the result is obtained by using the stochastic equicontinuity properties and invoking the properties of the exchangeable bootstrap. The proof involves using the law of total variance by conditioning on $I_2$ and $\mathbf{W}_n$ to show that each component of the quantities studies go to zero in probability. We point out that when $\theta_0$ and hence the score $m$ are real valued, Assumption \ref{AssonVarm} implies that 
  \[\
\text{Var}\left\{m(O;\theta_0,\hat{h})-m(O;\theta_0,h_0)\right\} \longrightarrow 0
\]
in probability, which is a common assumption used in problems involving estimating equations; see for example \citet{VanderVaart:1998}.

\item Assumption \ref{Assonf} hinges upon Neyman orthogonality and the rate at which $h_0$ is estimated. In many models, it is sufficient that the nuisance parameters are estimated at a rate that is $o(N^{-1/4})$ as per the classical result. In fact, the second derivative $f''$ of $f$ depends in general on the rates at which the nuisance parameters need to be estimated. We require in general the estimation rate $\Vert \hat{h}-h_0 \Vert_{\mathcal{H}}=o_{P_{O}}(n^{-1/4})$ that is in fact met by many common estimation procedures.  There are more refined conditions that can be imposed on a case by case basis, especially when $h$ itself consists of two nuisance parameters; see \citet{Chernozhukov/etal:2018}.  Rates of convergence of classical non-parametric methods under regularity assumptions can be found in \citet{Gyorfi2002} and \citet{Tsybakov:2009} whereas \citet{Chernozhukov/etal:2018} surveys results about the rate of convergence of machine learning methods such as random forests and neural networks. Assumption \ref{Assonf} enables us to conclude that 
  \[
  \sqrt{n} \mathbb{E}_{P_O}m(O;\theta_0,\hat{h}) \longrightarrow 0 
  \]
  in probability. 
\end{enumerate}

 Combining the results together, and interchanging the roles of $I_1$ and $I_2$ we obtain the following proposition:
 \begin{proposition}{\label{Split}} Under Assumptions \ref{AssonA1}., \ref{AssonA2}., \ref{AssonVarm}., and \ref{Assonf}., the following hold: 
\begin{enumerate}
\item
\[
\sqrt{n} \left(\hat{\theta}_{1,n}-{\theta}_{0}\right)=-\sqrt{n} M_{\theta_0}^{-1}\sum\limits_{i=1}^n \frac{m\left(O_i;\theta_0,h_0\right)}{n} +o_{P_{O}}(1),
\]

\item
\[
\sqrt{n} \left(\hat{\theta}_{1,n,BB}-{\theta}_{0}\right)=-\sqrt{n} M_{\theta_0}^{-1}\sum\limits_{i=1}^n w_{in}m\left(O_i;\theta_0,h_0\right) +o_{P_{OW}}(1).
\]
\item 
\[
\sqrt{n} \left(\hat{\theta}_{1,n,BB}-\hat{\theta}_{1,n}\right)=-\sqrt{n} M_{\theta_0}^{-1}\sum\limits_{i=1}^n \left(w_{in}-\frac{1}{n}\right)m\left(O_i;\theta_0,h_0\right) +o_{P_{OW}}(1)
\]
\item 
\[
\sqrt{n} \left(\hat{\theta}_{2,n}-{\theta}_{0}\right)=-\sqrt{n} M_{\theta_0}^{-1}\sum\limits_{i=1}^n \frac{m\left(O_{n+i};\theta_0,h_0\right)}{n} +o_{P_{O}}(1),
\]
\item 
\[
\sqrt{n} \left(\hat{\theta}_{2,n,BB}-{\theta}_{0}\right)=-\sqrt{n} M_{\theta_0}^{-1}\sum\limits_{i=1}^n v_{in}m\left(O_{n+i};\theta_0,h_0\right) +o_{P_{OV}}(1).
\]
\item 
\[
\sqrt{n} \left(\hat{\theta}_{2,n,BB}-\hat{\theta}_{2,n}\right)=-\sqrt{n} M_{\theta_0}^{-1}\sum\limits_{i=1}^n \left(v_{in}-\frac{1}{n}\right)m\left(O_{n+i};\theta_0,h_0\right) +o_{P_{OV}}(1)
\]
 \end{enumerate}
 \end{proposition}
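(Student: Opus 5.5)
The plan is to prove items 1--3 using the linear-score expansions displayed before the statement. Items 4--6 then follow by exchanging the roles of $I_1$ and $I_2$, with weights $v_{in}$ drawn independently of $w_{in}$. From the exact identities
\[
\Big\{\sum_{i=1}^n w_{in}A(O_i;\hat h_2)\Big\}(\hat\theta_{1,n,BB}-\theta_0)=-\sum_{i=1}^n w_{in}m(O_i;\theta_0,\hat h_2),
\]
together with the analogous identity for $w_{in}=1/n$, the argument splits into two parts: (a) the weighted matrix $\hat M_{BB}:=\sum_i w_{in}A(O_i;\hat h_2)$ and its unweighted analogue converge in probability to $M_{\theta_0}$; (b) the four-term expansion of $\sqrt n\sum_i w_{in}m(O_i;\theta_0,\hat h_2)$ reduces to its leading term $\sqrt n\sum_i w_{in}m(O_i;\theta_0,h_0)$ plus $o_{P_{OW}}(1)$. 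The leading term is $O_{P_{OW}}(1)$ by the known-$h_0$ result (Theorem \ref{thmllbroot}, or simply a CLT plus bootstrap CLT). Given (a), invertibility of $M_{\theta_0}$ and continuity of inversion show that $\hat M_{BB}^{-1}$ exists with probability tending to one and converges to $M_{\theta_0}^{-1}$, so Slutsky yields item 2. Item 1 is the special case $w_{in}=1/n$, and item 3 is the difference of items 2 and 1.

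For (a), I condition on $I_2$, so that $\hat h_2$ is fixed, lies in $H_N$ with probability tending to one, and the $A(O_i;\hat h_2)$, $i\in I_1$, are conditionally i.i.d. Writing
\[
\hat M_{BB}-M_{\theta_0}=\sum_i w_{in}\{A(O_i;\hat h_2)-\mathbb E[A(O;\hat h_2)\mid I_2]\}+\{\mathbb E[A(O;\hat h_2)\mid I_2]-M_{\theta_0}\},
\]
the second piece is at most $a_N$ by Assumption \ref{AssonA1}. For the first piece, conditionally on $I_2$ and $\mathbf W_n$ its mean is zero and its second moment is bounded by $\sum_i w_{in}^2\, C$ by Assumption \ref{AssonA2}. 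Since $\mathbb E\sum_i w_{in}^2=2/(n+1)\to0$ for Dirichlet weights, Chebyshev applied conditionally, followed by dominated convergence to remove the conditioning, gives convergence to zero in $P_{OW}$-probability.

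For (b), the last term $\sqrt n\,\mathbb E_{P_O}m(O;\theta_0,\hat h_2)$ tends to zero by Assumption \ref{Assonf}, applied on the event $\hat h_2\in H_N$. Here it is essential that the expectation is over a fresh $O$ with $\hat h_2$ held fixed, which sample splitting guarantees. The empirical-process term and the bootstrapped empirical-process term are the main obstacle, and they are handled by the same conditioning device. Set $\Delta_i=m(O_i;\theta_0,\hat h_2)-m(O_i;\theta_0,h_0)$ for $i\in I_1$. Given $I_2$ these are i.i.d. with conditional mean $\mu=\mathbb E[\Delta\mid I_2]$ and $\mathbb E[\|\Delta\|^2\mid I_2]\le\delta_N^2$ by Assumption \ref{AssonVarm}.

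\emph{Empirical-process term.} The conditional second moment of $\sqrt n(\bar\Delta-\mu)$ is at most $\delta_N^2\to0$.

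\emph{Bootstrap term.} Write $\sqrt n\sum_i(w_{in}-1/n)\Delta_i=\sqrt n\sum_i(w_{in}-1/n)(\Delta_i-\mu)$, using $\sum_i w_{in}=1$. Conditioning on $I_2$ and using independence of the weights from the data, its second moment equals $n\,\mathbb E\sum_i(w_{in}-1/n)^2\cdot\mathbb E[\|\Delta-\mu\|^2\mid I_2]$, because the cross terms vanish by conditional independence and zero mean of $\Delta_i-\mu$. For Dirichlet weights, $n\,\mathbb E\sum_i(w_{in}-1/n)^2=n\cdot n\cdot\frac{n-1}{n^2(n+1)}\le1$, so this second moment is $O(\delta_N^2)\to0$.

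Conditional Chebyshev, together with the fact that the conditional probabilities are bounded by one and tend to zero, then gives unconditional convergence to zero in $P_O$ and $P_{OW}$ probability. This is the point where the Donsker/stochastic-equicontinuity argument is replaced. The delicate bookkeeping is on the event $\{\hat h_2\notin H_N\}$, which has vanishing probability and is simply excluded. Combining (a) and (b) gives items 1--3, and symmetry gives items 4--6.
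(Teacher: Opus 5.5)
Your proposal is correct and follows essentially the paper's proof. You use the same linear-score expansion, control the matrix term and both empirical-process remainders by conditioning on $I_2$ (and the weights), bound conditional second moments via Assumptions \ref{AssonA2} and \ref{AssonVarm} with Chebyshev, and use Assumption \ref{Assonf} for the bias term; your centering by $\mu$ via $\sum_i w_{in}=1$ is the paper's observation that the variance-of-conditional-mean term vanishes, and your Dirichlet-specific moments and explicit exclusion of $\{\hat h_2\notin H_N\}$ are only cosmetic variants of its general-weight argument.
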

We now exploit the fact that $2\hat{\theta}_n=\hat{\theta}_{1,n}+\hat{\theta}_{2,n}$ and $2\hat{\theta}_{n,BB}=\hat{\theta}_{1,n,BB}+\hat{\theta}_{2,n,BB}$ to obtain the following proposition:
\begin{proposition}{\label{PropRALEstimatorsSampleSplit}} 
In light of the results obtained in Proposition \ref{Split}, the following hold

    \begin{enumerate}
        \item 
        \[
        \sqrt{n} \left(\hat{\theta}_{n}-{\theta}_{0}\right)=-\sqrt{n} M_{\theta_0}^{-1}\sum\limits_{i=1}^{2n} \frac{m\left(O_{i};\theta_0,h_0\right)}{2n} +o_{P_{O}}(1),
        \]
        \item 
        \[
         \sqrt{n} \left(\hat{\theta}_{n,BB}-{\theta}_{0}\right)=-\frac{\sqrt{n}}{2} M_{\theta_0}^{-1}\left\{\sum\limits_{i=1}^{n}w_{in} m\left(O_{i};\theta_0,h_0\right) +\sum\limits_{i=1}^n v_{in}m\left(O_{n+i};\theta_0,h_0\right)\right\}+o_{P_{OV}}(1)+o_{P_{OW}}(1)
        \]
        \item 
         \begin{align*}
        \sqrt{n} \left(\hat{\theta}_{n,BB}-\hat{\theta}_{n}\right)=&-\frac{\sqrt{n}}{2} M_{\theta_0}^{-1}\left\{\sum\limits_{i=1}^{n}\left(w_{in}-\frac{1}{n}\right) m\left(O_{i};\theta_0,h_0\right) +\sum\limits_{i=1}^n \left(v_{in}-\frac{1}{n}\right)m\left(O_{n+i};\theta_0,h_0\right)\right\}\\
        &+o_{P_{OV}}(1)+o_{P_{OW}}(1),
        \end{align*}
        \item If for all $j \in \left\{1,\ldots,n\right\}$, we have that $w_{jn}=v_{jn}$, we then get:
        \begin{enumerate}
        \item 
        \[
        \sqrt{n} \left(\hat{\theta}_{n,BB}-{\theta}_{0}\right)=-\sqrt{n} M_{\theta_0}^{-1}\sum\limits_{i=1}^{n}w_{in} \left\{m\left(O_{i};\theta_0,h_0\right)+m\left(O_{n+i};\theta_0,h_0\right)\right\}+o_{P_{OW}}(1)
        \]
       \item \[
\sqrt{n}\left(\hat{\theta}_{n,BB}-\hat{\theta}_n\right)=-\sqrt{n} M_{\theta_0}^{-1}\sum\limits_{i=1}^n \left(w_{in}-\frac{1}{n}\right)\frac{m\left(O_{i};\theta_0,h_0\right)+m\left(O_{n+i};\theta_0,h_0\right)}{2}+o_{P_{OW}}(1).
\]       
\end{enumerate}
\end{enumerate} 
\end{proposition}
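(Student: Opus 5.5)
The plan is to obtain each item by averaging the corresponding pair of expansions in Proposition \ref{Split}, using the identities $2\hat{\theta}_n=\hat{\theta}_{1,n}+\hat{\theta}_{2,n}$ and $2\hat{\theta}_{n,BB}=\hat{\theta}_{1,n,BB}+\hat{\theta}_{2,n,BB}$. No new probabilistic argument should be needed beyond bookkeeping of remainder terms.

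For item 1, I would write
\[
\sqrt{n}(\hat{\theta}_n-\theta_0)=\tfrac12\sqrt{n}(\hat{\theta}_{1,n}-\theta_0)+\tfrac12\sqrt{n}(\hat{\theta}_{2,n}-\theta_0).
\]
I would then substitute items 1 and 4 of Proposition \ref{Split}. The two leading terms combine as
\[
-\tfrac{\sqrt{n}}{2}M_{\theta_0}^{-1}\left\{\sum_{i=1}^n \tfrac{m(O_i;\theta_0,h_0)}{n}+\sum_{i=1}^n \tfrac{m(O_{n+i};\theta_0,h_0)}{n}\right\}=-\sqrt{n}M_{\theta_0}^{-1}\sum_{i=1}^{2n}\tfrac{m(O_i;\theta_0,h_0)}{2n}.
\]
Half the sum of two $o_{P_O}(1)$ terms is again $o_{P_O}(1)$.

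Items 2 and 3 follow in the same way from items 2 and 5, and from items 3 and 6, of Proposition \ref{Split}. Item 3 can equivalently be read as item 2 minus item 1, since $\sum_i (w_{in}-1/n)m=\sum_i w_{in}m-\frac1n\sum_i m$. The remainders are $\tfrac12 o_{P_{OW}}(1)+\tfrac12 o_{P_{OV}}(1)$, which is exactly the form stated. If one wants a single remainder, I would note that under the joint law of $(O,W,V)$ with $W$ and $V$ independent of each other and of the data, each term is $o_{P_{OWV}}(1)$.

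For item 4, setting $v_{in}=w_{in}$ merges the two bootstrap sums into $\sum_i w_{in}\{m(O_i)+m(O_{n+i})\}$ with prefactor $\sqrt{n}/2$, and the two remainders collapse to a single $o_{P_{OW}}(1)$. In (b) this gives the stated average $\{m(O_i)+m(O_{n+i})\}/2$ directly. In (a), the factor $\tfrac12$ must be kept, so the leading term should read $-\tfrac{\sqrt{n}}{2}M_{\theta_0}^{-1}\sum_i w_{in}\{\cdot\}$. I read the displayed (a) as matching this up to a dropped $\tfrac12$, and would prove the version with $\tfrac12$.

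The main obstacle is making sure Proposition \ref{Split} still applies when $v=w$. The second-half expansions use weights $v$ that are independent of the data on $I_2$ and of $\hat h_1$. When $w$ is reused, this independence from $(O_{n+1},\dots,O_{2n})$ and $\hat h_1$ still holds, because $w$ depends on neither. The conditioning arguments in the proof of Proposition \ref{Split} (laws of total expectation and variance given $I_1$ and the weights) therefore go through unchanged, and each half-sample remainder remains $o_{P_{OW}}(1)$. Since each remainder converges to zero marginally, dependence between the two remainders is irrelevant for their sum.
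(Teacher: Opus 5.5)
Your proposal is correct and follows the paper's own route: the paper obtains this proposition by averaging the paired half-sample expansions of Proposition \ref{Split} through $2\hat{\theta}_n=\hat{\theta}_{1,n}+\hat{\theta}_{2,n}$ and $2\hat{\theta}_{n,BB}=\hat{\theta}_{1,n,BB}+\hat{\theta}_{2,n,BB}$, with no further argument. You are also right that item 4(a) should carry the factor $\tfrac12$; this is forced by item 2 with $v_{in}=w_{in}$ and by consistency with item 1 when $w_{in}=1/n$, and your remark that reused weights stay independent of $I_2$ and $\hat h_1$ is what justifies applying items 5--6 of Proposition \ref{Split} in that case.
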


Combining all the above cases in Proposition \ref{PropRALEstimatorsSampleSplit}, we obtain the following theorem:
\begin{theorem}
As $N \to \infty$, the following duality holds:  
    \begin{enumerate}
        \item  $\sqrt{N} (\hat{\theta}_{n}-{\theta}_{0}) \to \mathcal{N}(0,\Sigma)$
        \item $\sqrt{N}(\hat{\theta}_{n,BB}-\hat{\theta}_n) \mid O_1,\ldots,O_N \to \mathcal{N}(0,\Sigma)$
    \end{enumerate}
\end{theorem}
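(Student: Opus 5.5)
The plan is to read both parts of the theorem off the asymptotically linear representations in Proposition \ref{PropRALEstimatorsSampleSplit}, with $N=2n$, and then apply a classical central limit theorem for the frequentist part and a conditional (bootstrap) central limit theorem for the Bayesian part. Throughout I write $\psi(O)=-M_{\theta_0}^{-1}m(O;\theta_0,h_0)$. It has mean zero because $\mathbb{E}m(O;\theta_0,h_0)=0$. Its covariance is
\[
\Sigma=M_{\theta_0}^{-1}\,\mathbb{E}\{mm^\top\}\,M_{\theta_0}^{-\top},
\]
which matches Theorem \ref{thmllbroot}. I assume $\mathbb{E}\Vert m(O;\theta_0,h_0)\Vert_{p,2}^2<\infty$ as part of the standing regularity conditions.

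\textbf{Part 1.} By item 1 of Proposition \ref{PropRALEstimatorsSampleSplit},
\[
\sqrt{N}(\hat\theta_n-\theta_0)=\sqrt{2}\,\sqrt{n}(\hat\theta_n-\theta_0)=\frac{1}{\sqrt{2n}}\sum_{i=1}^{2n}\psi(O_i)+o_{P_O}(1).
\]
The leading term is a normalized sum of $N$ i.i.d.\ mean-zero vectors with covariance $\Sigma$. The multivariate Lindeberg--L\'evy theorem together with Slutsky's lemma then gives convergence to $\mathcal{N}(0,\Sigma)$.

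\textbf{Part 2.} I would use item 3 of Proposition \ref{PropRALEstimatorsSampleSplit}, with $w$ and $v$ independent Dirichlet$(1,\ldots,1)$ weight vectors on the two folds. Multiplying by $\sqrt{2}$ gives
\[
\sqrt{N}(\hat\theta_{n,BB}-\hat\theta_n)=\frac{1}{\sqrt{2}}\Big\{\sqrt{n}\sum_{i=1}^n(w_{in}-\tfrac1n)\psi(O_i)+\sqrt{n}\sum_{i=1}^n(v_{in}-\tfrac1n)\psi(O_{n+i})\Big\}+o_{P_{OW}}(1)+o_{P_{OV}}(1).
\]
Each bracketed term is an exchangeable-weight bootstrap of a sample mean of i.i.d.\ square-integrable vectors. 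By the conditional multiplier CLT (Praestgaard--Wellner, or equivalently Theorem \ref{thmllbroot} with $m$ replaced by $\psi$), each converges conditionally on the data, in probability, to $\mathcal{N}(0,\Sigma)$. The two terms use independent weights on disjoint data, so they are conditionally independent given $O_1,\dots,O_N$. Their sum, divided by $\sqrt{2}$, therefore converges conditionally to $\mathcal{N}(0,(\Sigma+\Sigma)/2)=\mathcal{N}(0,\Sigma)$. This can be checked through conditional characteristic functions, whose product converges in probability. If instead $w=v$, as in item 4(b), the leading term is a single bootstrap sum of the i.i.d.\ variables $\{\psi(O_i)+\psi(O_{n+i})\}/2$, with covariance $\Sigma/2$. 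Multiplying by $\sqrt{N}=\sqrt{2n}$ again gives variance $\Sigma$.

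\textbf{Main obstacle.} The delicate step is turning the unconditional remainders $o_{P_{OW}}(1)$ and $o_{P_{OV}}(1)$ into conditional negligibility given the data. If $R_N\to0$ in $P_{OW}$-probability, then for every $\epsilon>0$, Markov's inequality applied to $P_W(\Vert R_N\Vert>\epsilon\mid O)$ shows that this conditional probability tends to zero in $P_O$-probability. I would use this to run a conditional Slutsky argument in bounded-Lipschitz distance: the bounded-Lipschitz distance between the conditional law of $\sqrt{N}(\hat\theta_{n,BB}-\hat\theta_n)$ and that of its linear leading term goes to zero in outer probability. Combining this with the conditional CLT for the leading term yields
\[
\sup_{g\in BL_1}\big|\mathbb{E}_W g\{\sqrt{N}(\hat\theta_{n,BB}-\hat\theta_n)\}-\mathbb{E}g(Z)\big|\to0
\]
in probability, where $Z\sim\mathcal{N}(0,\Sigma)$. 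This is the sense in which the conditional convergence in part 2 holds.
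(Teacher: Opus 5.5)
Your proposal is correct and takes essentially the same route as the paper. The paper obtains the theorem by combining the asymptotically linear representations of Proposition~\ref{PropRALEstimatorsSampleSplit} (with $N=2n$) and then invoking the classical and bootstrap central limit theorems; your Markov/bounded-Lipschitz handling of the unconditional $o_{P_{OW}}(1)$ and $o_{P_{OV}}(1)$ remainders, and your check that the two independently weighted folds combine to $(\Sigma+\Sigma)/2=\Sigma$, fill in details the paper leaves implicit.
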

\subsection{Algorithm}
In light of the algorithm  presented in Section \ref{SSalgo} as well as Assumptions \ref{AssonA1}-\ref{Assonf}, and the results presented in Section \ref{SSResults}, we propose the following algorithm in order to obtain a posterior sample for the parameter of interest $\theta_0$. 
%\textcolor{blue}{This is the same score as at the top of page 4 - no need to write it out again}
Algorithm  \ref{BBSampleSplittingIndepWeights} illustrates how to obtain samples from the posterior distribution when the weights used for each sample are independent.

%\textcolor{blue}{Do we really need to have both of these algorithms written out ?  They are essentially the same apart from the weights in Step 3(a).}

\begin{algo}{\label{BBSampleSplittingIndepWeights}}

Sample splitting algorithm using independent weights:

  \begin{enumerate}
      \item  Split the data into two samples $I_1$ and $I_2$ of the same size.
      \item  Estimate $h_0$ using flexible methods using sample $I_1$ and denote the estimator $\hat{h}_1$ and estimate $h_0$ using ML methods using sample $I_2$ and denote the estimator $\hat{h}_2$.
      \item For each $j \in \left\{1,\ldots,B \right\},$
        \begin{enumerate}
        \item Draw random weights $(w^{(j)}_{1n},\ldots,w^{(j)}_{nn}) \sim \text{Dir}(1,\ldots,1)$ and $(u^{(j)}_{1n},\ldots,u^{(j)}_{nn}) \sim \text{Dir}(1,\ldots,1)$ independent from each other.
        
        \item Draw random distribution $\displaystyle{F_1^{(j)}=\sum\limits_{i \in I_1} w_{in}^{(j)}\delta_{O_i}}$ and $\displaystyle{F_2^{(j)}=\sum\limits_{i \in I_2} u_{in}^{(j)}\delta_{O_i}}$
        \item  Compute 
        $$
        \theta_1^{(j)}= -\frac{\sum\limits_{i\in I_1} w^{(j)}_{in}B(O_i;\hat{h}_2)}{\sum\limits_{i \in I_1} w^{(j)}_{in}A(O_i;\hat{h}_2)}
        \qquad
        \text{and} \qquad 
        \theta_2^{(j)}= -\frac{\sum\limits_{i\in I_2} u^{(j)}_{in}B(O_i;\hat{h}_1)}{\sum\limits_{i \in I_2} u^{(j)}_{in}A(O_i;\hat{h}_1)}
        $$
        \item Compute 
        $$
        \theta^{(j)}=\frac{\theta_1^{(j)}+\theta_2^{(j)}}{2}
        $$
        \end{enumerate}
        \item Output  $\theta^{(1)},\ldots,\theta^{(B)}$.
  \end{enumerate}  
\end{algo}

Step 3(a) can be replaced by a step that sets the weights to be identical without changing the validity of the algorithm.  Other methods for reporting the posterior based on replicate splits can be used; see Appendix \ref{sec:RepSplits}.

\subsection{Variance Estimation}{\label{varestss}}
The quantities $\sqrt{n}( \hat{\theta}_{1,n,BB}-\hat{\theta}_{1,n})$ and $\sqrt{n} (\hat{\theta}_{2,n,BB}-\hat{\theta}_{2,n})$ are conditionally and unconditionally asymptotically independent. In fact, $\sqrt{n}(\hat{\theta}_{1,n}-\theta_0)$ and $\sqrt{n}(\hat{\theta}_{2,n}-\theta_0)$ are also unconditionally independent. This fact prompts us to estimate the asymptotic posterior variance by $
\hat{\Sigma}=( \hat{\Sigma}_1+\hat{\Sigma}_2)/2$,
where for $j=1,2$
\[
\hat{\Sigma}_j = \left\{\frac{1}{n} \sum\limits_{i \in I_j } A(O_i;\hat{h}_{-j})\right\}^{-1} \left\{\frac{1}{n}\sum\limits_{i \in I_j} m(O_i;\hat{\theta}_{j},\hat{h}_{-j})m(O_i;\hat{\theta}_{j},\hat{h}_{-j})^\top\right\} \left\{\frac{1}{n} \sum\limits_{i \in I_j} A(O_i;\hat{h}_{-j})\right\}^{-\top}.
\]
This estimator is consistent for $\Sigma$ under reasonable conditions. We refer to  \citet{Newey:1994} and \citet{Chernozhukov/etal:2018} for more details.

\section{Simulation study}
\label{sec:Sims}
In this section, we empirically verify our theoretical results established in Section \ref{sec:results}. In particular, we will validate through simulations the asymptotic results established in Proposition \ref{SSResults} and verify that, under Neyman orthogonality, the posterior variance is not affected by the estimation of the estimation of nuisance parameter. We now study the following causal structural model, or the partially linear model studied in \citet{Robinson:1988}, \citet{Chernozhukov/etal:2018}, and \citet{Chen/etal:2022}. We have $N$ i.i.d.  observations generated using the following data generating mechanism:
\begin{align*}
Y &= \theta_0 Z + g_0(X) + U\\
Z\mid X& \sim \text{Bern}\left\{ e_0(X)\right\},
\end{align*}
where $\theta_0=3$, $X \in \mathbb{R}^{50}$. Predictor $X$ follows a multivariate Normal with zero mean and covariance matrix whose $(i,j)$-th entry is $0.8^{|i-j|}/4$, for $1 \leq i,j \leq 50$. $U$ is a standard Normal random variable and $Z$ is a binary treatment with values in $\left\{0,1\right\}$ such that $P(Z=1|X=x)=e_0(X)$.
We also assume that
\begin{align*}
 g_0(x)&=x_1+\exp(x_2-1)+ \vert x_3 \vert+\exp(x_4-1)+\vert x_5 x_6 \vert+x_{14}^2+x_{50}\\
e_0(x) &= 0.5  \frac{\exp(x_1+x_2+\ldots+x_{50}) }{1+\exp(x_1+x_2+\ldots+x_{50})} + 0.2.
\end{align*} 
The verification of the results of Proposition \ref{SSResults} will be carried out by examining the estimator of $\theta_0$ resulting from Robinson's partialling-out approach. We also indicate that similar results are found when estimating $\theta_0$ using the AIPW estimator.
Since the  nuisance parameter $g_0(X)$ may be difficult to estimate,  we resort to the partialling-out approach previously discussed in order to redefine the nuisance parameters. The relevant score equation hence becomes:
$$
m(O;\theta_0,h_0)= \left[Y-k_{y}^0(X)-\theta_0 \left\{Z-e_0(X)\right\}\right]\left\{Z-e_0(X)\right\},
$$
where $k_y^0(x)=\mathbb{E}[Y|X=x]=\theta_0 {e_0(x)} + g_0(x)$ and $h_0=(k_y^0,e_0).$  In this case, the doubly robust estimator is
\begin{equation}\label{eq:DRG}
\hat \theta_n = \dfrac{\sum\limits_{i=1}^n (Z_i - \hat e_0(X_i)) (Y_i - \hat k_y^0(X_i))}{\sum\limits_{i=1}^n (Z_i - \hat e_0(X_i))^2}.
\end{equation}

The simulation study uses sample sizes of $N=50, 100, 500$ and $1000$. We divide the data into $K=5$ subsamples each of size $n=N/K$. We obtain $\hat{\theta}_n$ as the estimator of the ATE as the average of five estimators, naturally extending the algorithm in Section \ref{SSalgo} to  $K=5$ splits instead of two. We estimate the two components of $h_0$ separately using random forests. We also estimate the variance by $\hat{\Sigma}$ as in Section \ref{varestss}, by using five splits instead of two, for the frequentist and Bayesian cases.   We report results for the standardized quantities
\[
T_N = \sqrt{N} \hat{\Sigma}^{-1/2} (\hat{\theta}_{n}-\theta_0 ) \qquad T_{N,BB} = \sqrt{N}\hat{\Sigma}^{-1/2}(\hat{\theta}_{n,BB}-\hat{\theta}_n)
\]
which, under the theory, should have standard Normal distributions. Note that in this case $\hat \Sigma$ is a scalar quantity.  We used 1000 replicate analyses to support the findings of Proposition \ref{SSResults}

The results we obtain show that the bias in estimating the ATE is eliminated by the use of sample-splitting when $N$ is sufficiently large, and reduced compared to non sample-splitting analyses when $N$ is smaller.

\begin{enumerate}

\item Figure \ref{fig:Boxsplit} shows boxplots of the frequentist estimates $\hat \theta_n$ obtained across replicates for $N=50, 100, 500$ and $1000$.  Sample-splitting reduces bias for the smaller sample sizes and removes it for $N=500$; the bias is non-negligible even for $N=1000$ due to the violation of the Donsker conditions and a non-negligible contribution from \eqref{eq:Dterm}, although there is likely a contribution to the bias that arises from the random forest estimators $h_0$ having relatively slow convergence rate.
\begin{figure}
    \centering
    \includegraphics[width=0.75\linewidth]{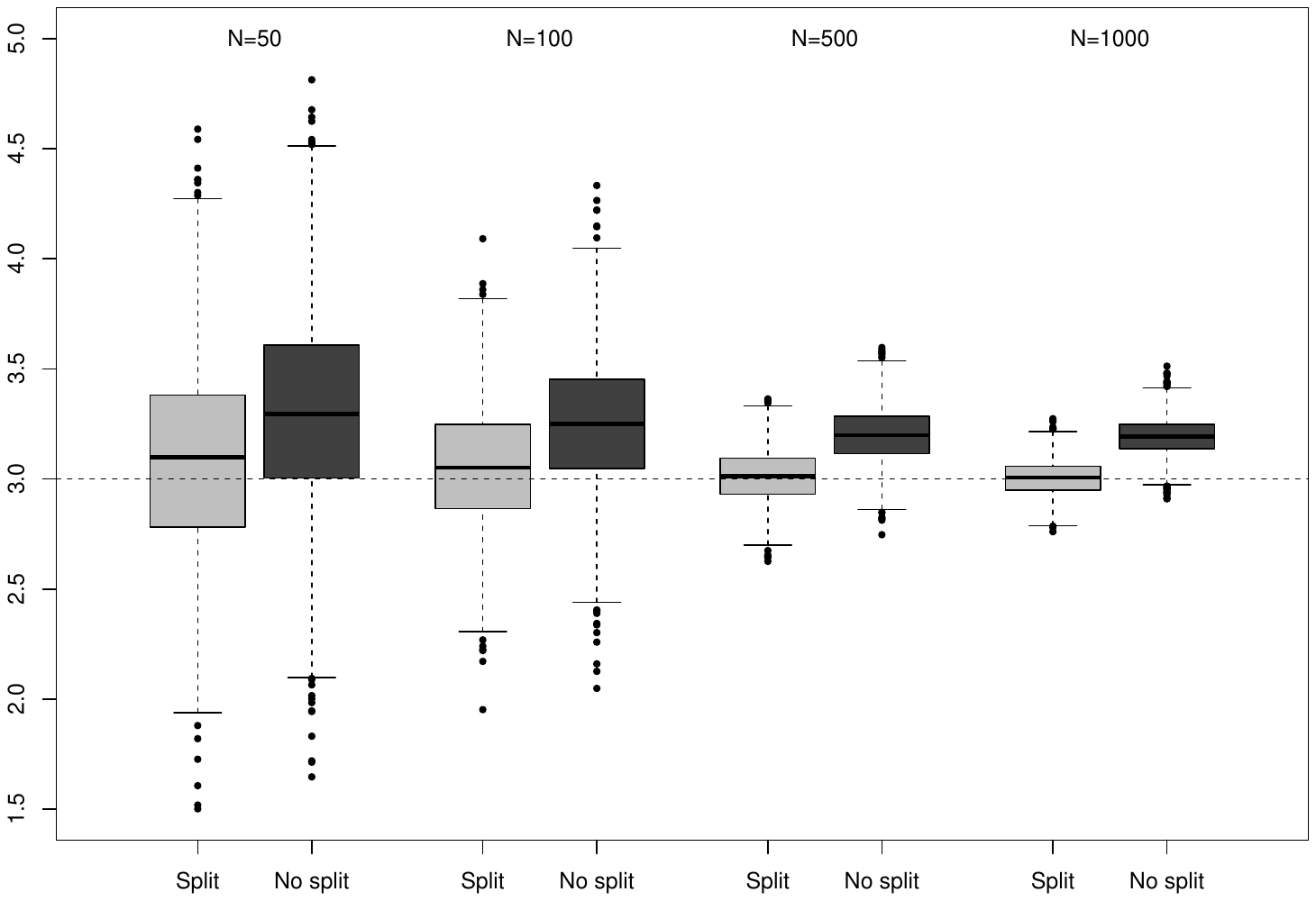}
    \caption{Boxplot of frequentist estimates for $N=50,100,500, 1000$: comparison of results for sample-splitting and no-sample splitting.}
    \label{fig:Boxsplit}
\end{figure}

    \item For $N=500$, the sampling distribution of $\hat{\theta}_{n}$ is approximately Normal, as demonstrated by the QQ plot in the second panel of Figure \ref{fig:QQ500-both}.  The first panel depicts the QQ plot for $N=50$, and illustrates that even for the smaller sample size, the Normal approximation is fairly good, even if the bias is present and the variance estimation is imperfect.
    \begin{figure}
\centering\includegraphics[width=0.48\linewidth]{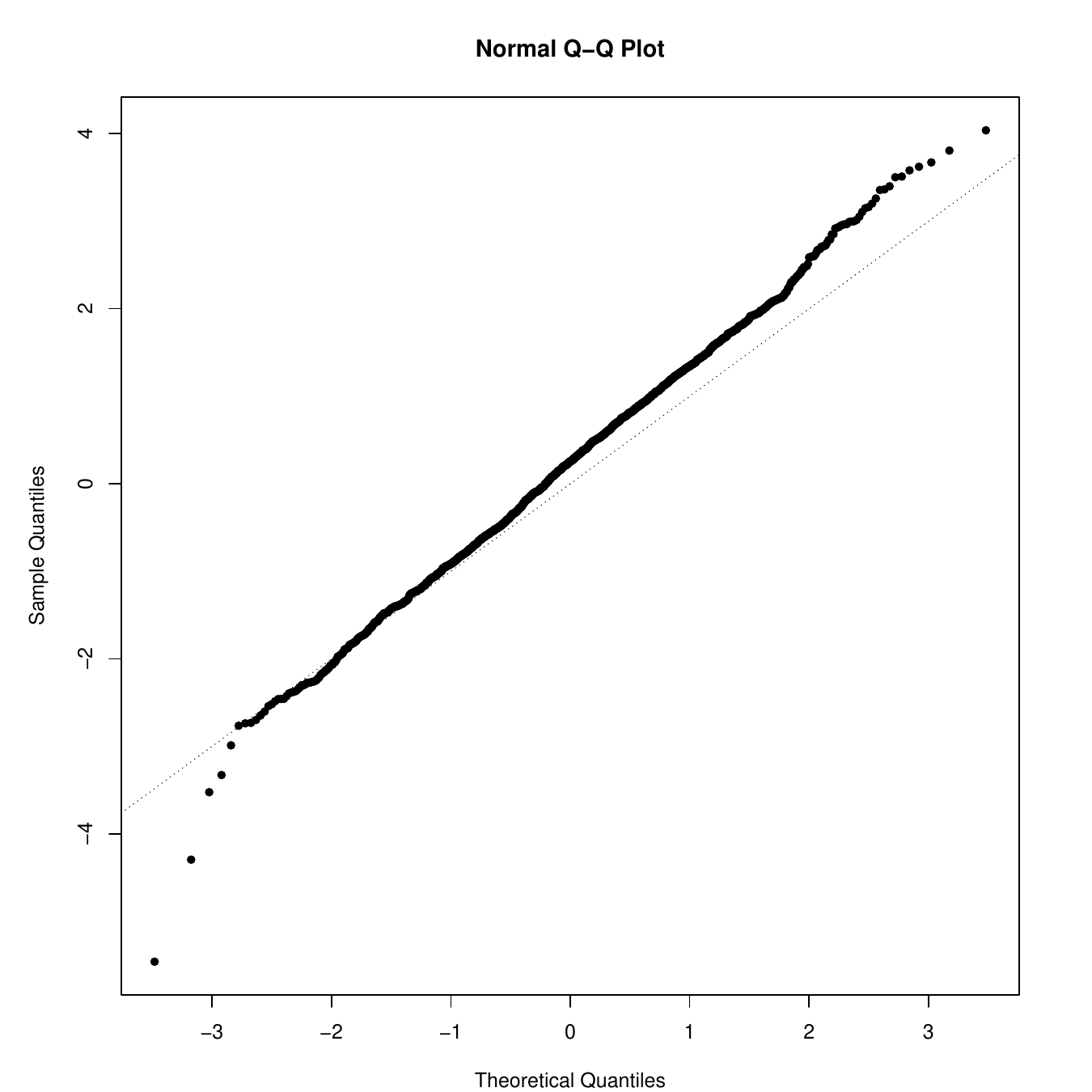}
\includegraphics[width=0.48\linewidth]{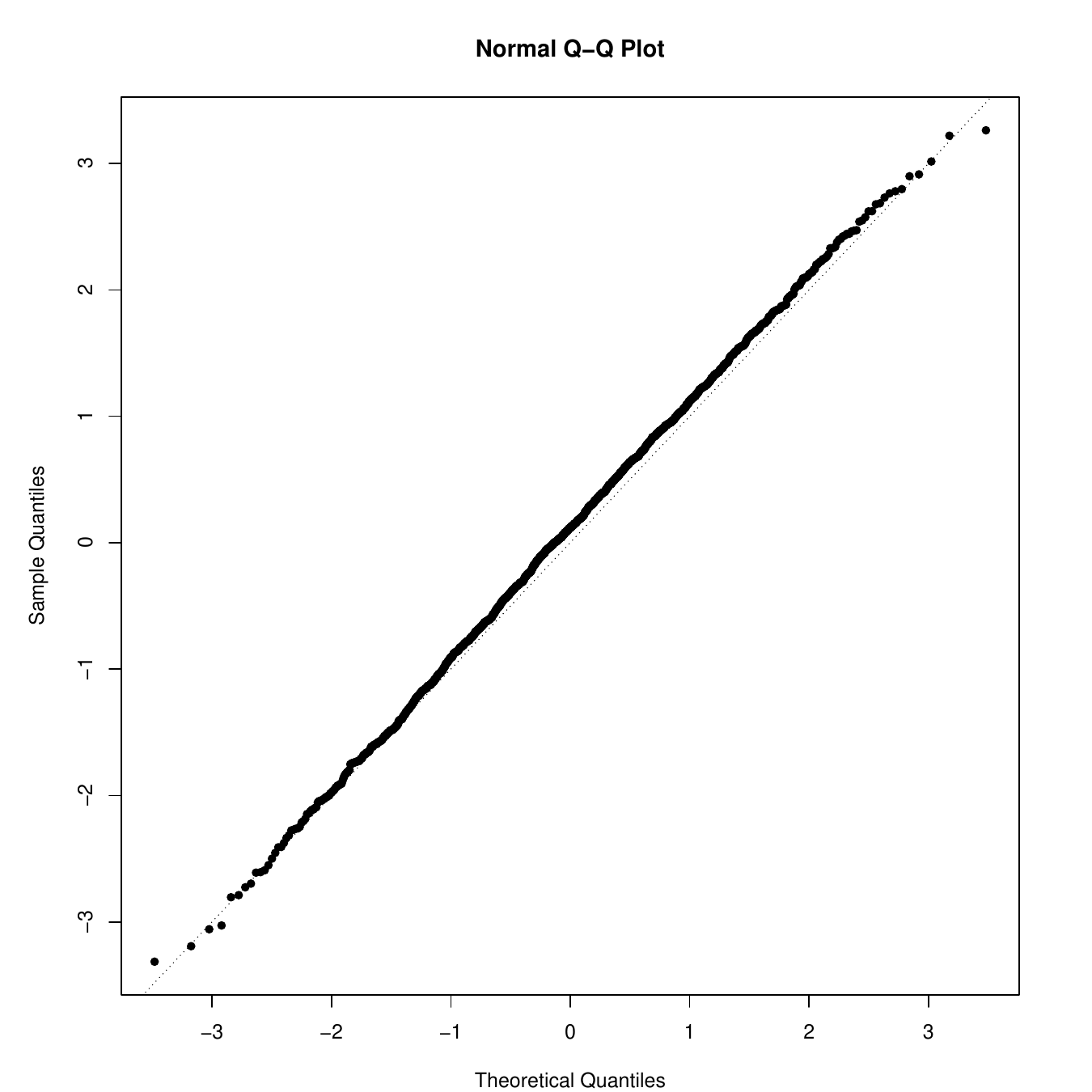}
        \caption{QQ plot for $T_N$ for $N=50$ (left panel) and $N=500$ (right panel) against the standard Normal distribution.}
        \label{fig:QQ500-both}
    \end{figure}
    For $N=500$, we note that the empirical variance of $\hat{\theta}_n$ multiplied by $N$ equals $6.82$.  The average of the variance estimates (as described in Section \ref{varestss}) is equal to $6.56$. The theoretical asymptotic variance, equal to the variance bound for this estimator, is $1/\mathbb{E}\left[e_0(X)\left\{1-e_0(X)\right\}\right]$, which in this case can be estimated to be $5.13$ using Monte Carlo.

\begin{table}[ht]
    \centering
    \begin{tabular}{lcccc}
    $N$ & 50 & 100 & 500 & 1000\\
    \hline
    \\[-6pt]
    Empirical variance&9.46&8.01&6.82&6.39\\
    \\[-6pt] 
    Average $\hat \Sigma$&7.74&7.42&6.56&6.31\\
    \\[-6pt]
    Average posterior variance & 7.20&7.05&6.47&6.27\\
    \\[-6pt]
    \hline
    \\[-6pt]
    \end{tabular}
    \caption{Frequentist asymptotic variances and $N$ times the average posterior variance for different sample sizes.}
    \label{tab:FreqVar}
\end{table}
    
    \item The posterior (conditional) distribution distribution of $T_{N,BB}$ given $O_1,\ldots,O_N$ is standard Normal, mimicking the (unconditional) frequentist distribution. We ran a Shapiro-Wilk test at a significance level of 0.05 across the 2000 posteriors obtained. The rejection rates for the (null) hypothesis that the posterior distribution of $T_{N,BB}$ is standard Normal were $0.275, 0.141,0.064$ and $0.056$ for the four sample sizes. The values of $N$ times the average posterior variances are displayed in the final row of Table \ref{tab:FreqVar}.

  %\item We also add the plots for the smaller sample-size of $N=250$, see Figures \ref{N=250Freq} and \ref{N=250Post}. We note in this case that the empirical variance of $\hat{\theta}_n$ times $N$ is equal to $7.79$ and that the empirical bias of $\hat{\theta}_n$ is $0.0193$. The average of the variance estimates as described in Section \ref{varestss} is $7.55$ and $N$ times the average of the $1000$ empirical posterior variances is $7.34$.

\item We may assess (frequentist) coverage of uncertainty intervals based on $\hat \theta_n$ and $\hat \theta_{n,BB}$ using the asymptotic Normal approximation and credible interval respectively.  The coverage values at nominal level 0.95 are presented in Table \ref{tab:Cover}.

\begin{table}[ht]
    \centering
    \begin{tabular}{lcccc}
    $N$ & 50 & 100 & 500 & 1000\\
    \hline
    \\[-6pt]
   Frequentist&1.000&1.000&1.000&1.000\\
    \\[-6pt] 
    Bayesian & 0.889& 0.918&0.940& 0.945\\
    \\[-6pt]
    \hline
    \\[-6pt]
    \end{tabular}
    \caption{Empirical coverage of frequentist and Bayesian uncertainty intervals}
    \label{tab:Cover}
\end{table}

The coverage of the frequentist interval is above the nominal level due to the variance estimator from Section \ref{varestss} being too large on average.  The coverage of the Bayesian credible interval is below the nominal level when $N$ is small, but reasonable when $N=500$.
\end{enumerate}

The findings underline the importance of Neyman orthogonality in this Bayesian context in one notable aspect; the nuisance parameters can be estimated and remain unchanged during the Bayesian procedure, without affecting the posterior variance. That is, Neyman orthogonality allows us to disregard the uncertainty attached to estimating the nuisance parameters, and facilitate the straigtforward calculation of $\hat \theta_{n,BB}$ in \eqref{eq:DRG} as the two residual terms in numerator and denominator
\[
(Z_i - \hat e_0(X_i)) \qquad (Y_i - \hat k_y^0(X_i))
\]
are fixed across the Bayesian bootstrap resamples.

\section{Conclusion}
\label{sec:conc}
We have shown how to sample from a posterior distribution of a targeted parameter of interest in presence of a highly complex nuisance parameter. We have avoided restrictions on the nuisance space by the use of cross-fitting as in \citet{Chernozhukov/etal:2018}.  Our results establish the validity of the bootstrap calculation for sample-splitting algorithms. Technical details as well as possible extensions to non-differentiable scores and modified conditions for some theorems are found in \citet{Sabbagh2025}.
%, PhD thesis ``Bayesian Causal Inference in Semi-Parametric Models''.
Our results highlight the importance of three elements; the central concept of of Neyman orthogonality and its impact on estimation; the need for estimation of $h_0$ at an acceptable rate, and the utility of sample-splitting when Donsker conditions are relaxed.  Using sample-splitting, reasonable inferential performance can be obtained in fairly small samples even when the nuisance component is estimated non-parametrically and the predictors are relatively high-dimensional.  

We have focussed on the Bayesian bootstrap due to its fully Bayesian nature, that is, it is a calculation that arises from a Bayesian non-parametric posterior.  In parallel, it is natural to consider the classical (Efron) non-parametric bootstrap as a competing method.  The classical bootstrap method is covered by the theoretical results we present; the random weights arise from a standardized draw from a multinomial distribution on $\{1,2,\ldots,n\}$ with equal probability $1/n$ for each category instead of the Dirichlet distribution with all parameters equal to 1.  Along with a lack of fully coherent Bayesian justification for the classical bootstrap, there are other key differences, most notably the (marginal) variance of the weights is larger for the classical version by a factor $1/n$ -- that is, for $i=1,\ldots,n$,
\[
\text{Var}_{\text{Efron}}[w_{in}] = \left( 1 + \frac{1}{n}\right) \text{Var}_{BB}[w_{in}]
\]
which ultimately leads to a higher variance of the classical bootstrap estimator.  When $N=50$, $N$ times the average `posterior' variance is equal to 8.48, compared to 7.20 for the Bayesian bootstrap from Table \ref{tab:FreqVar}; when $N=1000$, the equivalent values are 6.33 and 6.31.  For $N=50$, there is a consequent increase in coverage to 0.921 (from 0.889).  The coverage calculation here is the frequentist version (with a single fixed parameter rather than a parameter sampled from the prior).  For a single data set, the Bayesian bootstrap provides a posterior for $\theta_0$ that has on average a smaller variance than that computed by the classical bootstrap, a result observed in parametric Bayesian settings.  Similar calculations can be performed for other types of bootstrap as in \citet{Praestgaard/Wellner:1993}.

\appendix

\section{Technical Details}

 \begin{App_prop}
     Let $\mathbf{W}_n=(w_{1n},\ldots,w_{nn})$ be a random vector of non-negative components summing to $1$ such that $\mathbb{E}w_{in}=1/{n}$ and $\sup\limits_{n \in \mathbb{N} } n^2 \mathbb{E}w_{in}^2 < C <\infty$.
     Then under assumption
     \[
  \sum\limits_{i=1}^n w_{in}A(O_i;\hat{h}_2) \quad \text{ and } \quad \frac{1}{n}\sum\limits_{i=1}^nA(O_i,\hat{h}_2) 
  \]
  converge unconditionally to $\mathbb{E}A(O;h_0)$.
 \end{App_prop}
 \begin{proof}
 We prove the second statement and the first would follow by setting the weights to be equal to $n^{-1}$.
    \begin{align*}
\sum\limits_{i=1}^n w_{in} A(O_i;\hat{h}_2) - \mathbb{E} A(O,h_0) & =
\sum\limits_{i=1}^n w_{in} A(O_i;\hat{h}_2)- \mathbb{E}_{ I_{1}|I_2,\mathbf{W}_n} \sum\limits_{i=1}^n w_{in} A(O_i;\hat{h}_2)\\
& \quad +\mathbb{E}_{ I_{1}|I_2,\mathbf{W}_n} \sum\limits_{i=1}^n w_{in} A(O_i;\hat{h}_2)- \mathbb{E}A(O;h_0).
\end{align*}

Let 
\[
E_1=\sum\limits_{i=1}^n w_{in} A(O_i;\hat{h}_2)- \mathbb{E}_{ I_{1}|I_2,\mathbf{W}_n}\left\{ \sum\limits_{i=1}^n w_{in} A(O_i;\hat{h}_2)|I_2,\mathbf{W}_n\right\}
\]
and 
\[
E_2= \mathbb{E}_{ I_{1}|I_2,\mathbf{W}_n}\left\{ \sum\limits_{i=1}^n w_{in} A(O_i;\hat{h}_2)|I_2,\mathbf{W}_n\right\} -\mathbb{E}A(O;h_0).
\]
The law of total expectation implies that $\mathbb{E} [E_1] =0_{p \times p}.$

Moreover, if $j,k \in \left\{1,\ldots,p\right\},$
\begin{align*}
\var \left\{[E_1]_{jk} | I_2,\mathbf{W}_n\right\} &= \var \left\{ \sum\limits_{i=1}^n w_{in} A_{jk}(O_i;\hat{h}_2)| I_2,\mathbf{W}_n \right\} \\
&= \sum\limits_{i=1}^n w_{in}^2 \var \left\{A_{jk}(O_i;\hat{h}_2)| I_2\right\}\\
& \leq \sum\limits_{i=1}^n  w_{in}^2 \sup\limits_{h } \var  A_{jk}(O_i;h) \\
& \leq C' \sum\limits_{i=1}^n w_{in}^2
\end{align*}
Moreover, since $\mathbb{E}[E_1|I_2,\mathbf{W}_n]=0_{p \times p}$, then the law of total variance implies that 
$$
\var [E_1]_{jk}= \mathbb{E}_{I_2,\mathbf{W}_n} \var \left\{[E_1]_{jk}|I_2,\mathbf{W}_n\right\} \leq C' \mathbb{E}_{\mathbf{W}_n} \sum\limits_{i=1}^n w_{in}^2 \leq \frac{CC'}{n}.$$
Chebychev's inequality hence implies that $E_1$ converges to $0_{p \times p}$.
Now, note that 
\begin{align*}
E_2 &= \mathbb{E}_{ I_{1}|I_2,\mathbf{W}_n}\left\{ \sum\limits_{i=1}^n w_{in} A(O_i;\hat{h}_2)|I_2,\mathbf{W}_n\right\} -\mathbb{E}A(O;h_0)\\
&= \mathbb{E}_{ I_{1}|I_2}\left\{ \frac{1}{n}\sum\limits_{i=1}^n  A(O_i;\hat{h}_2)|I_2,\right\} -\mathbb{E}A(O;h_0)\\
&=\mathbb{E}_{O_1|I_2} \left\{ \  A(O_1;\hat{h}_2)|I_2\right\} -\mathbb{E} A(O;h_0)
\end{align*}
\begin{align*}
\left\vert [E_2]_{jk} \right\vert &= \left\vert \mathbb{E}_{O_1|I_2} \left\{ \  A_{jk}(O_1;\hat{h}_2)|I_2\right\} -\mathbb{E} A_{jk}(O;h_0) \right\vert \\
& \leq \sup\limits_{h \in H_N} \left\vert \mathbb{E}_{O_1|I_2} \left\{ \  A_{jk}(O_1;h)|I_2\right\} -\mathbb{E} A_{jk}(O;h_0) \right\vert\\
& \leq a_N
\end{align*}
This hence implies that $E_2$ converges to $0_{p \times p}$, establishing the previous two statements.
\end{proof}
\begin{App_prop}
Let $\mathbf{W}_n=(w_{1n},\ldots,w_{nn})$ be a random vector of non-negative components summing to $1$ such that $\mathbb{E}w_{in}=1/{n}$ and $\sup\limits_{n \in \mathbb{N} } n^2 \mathbb{E}w_{in}^2 < C <\infty$. Then, under Assumption \ref{AssonVarm},
 
\[
\sqrt{n}\sum\limits_{i=1}^n \left(w_{in}-\frac{1}{n}\right) \left\{ m\left(O_i;\theta_0,\hat{h}_2\right) -m\left(O_i;\theta_0,h_0\right)\right\}
\]
converges to $0$ in probability.
\end{App_prop}
\begin{proof}
We first show that the expected value of this term is exactly $0$ for each $n \in \mathbb{N}$.
\begin{align*}
\mathbb{E}_{O_{1}:O_{2n},\mathbf{W}_n} \sum\limits_{i=1}^n \left(w_{in}-\frac{1}{n}\right) & \left\{ m\left(O_i;\theta_0,\hat{h}_2\right) -m\left(O_i;\theta_0,h_0\right)\right\}\\
& = \sum\limits_{i=1}^n \mathbb{E}_{O_{1}:O_{2n},\mathbf{W}_n} \left[\left(w_{in}-\frac{1}{n}\right) \left\{ m\left(O_i;\theta_0,\hat{h}_2\right) -m\left(O_i;\theta_0,h_0\right)\right\}\right]\\
& =\sum\limits_{i=1}^n \mathbb{E}_{O_{1}:O_{2n},\mathbf{W}_n} \left[\left(w_{in}-\frac{1}{n}\right) \left\{ m\left(O_i;\theta_0,\hat{h}_2\right) -m\left(O_i;\theta_0,h_0\right)\right\}\right]\\
& = \sum\limits_{i=1}^n \mathbb{E}_{\mathbf{W}_n}\left(w_{in}-\frac{1}{n}\right) \mathbb{E}_{O_1:O_{2n}}\left\{ m\left(O_i;\theta_0,\hat{h}_2\right) -m\left(O_i;\theta_0,h_0\right)\right\}\\
& =0
\end{align*}

We now show that the variance 
\[
\text{Var}_{O_1:O_{2n},\mathbf{W}_n} \sum\limits_{i=1}^n 
 \sqrt{n}\left[\left(w_{in}-\frac{1}{n}\right) \left\{ m\left(O_i;\theta_0,\hat{h}_2\right) -m\left(O_i;\theta_0,h_0\right)\right\}\right]
\]
goes to $0$. We use the total variance formula, by conditioning on $\mathbf{W}_n$ and on $I_2$. Then
\[
\text{Var}_{O_1:O_{2n},\mathbf{W}_n} \sum\limits_{i=1}^n 
 \sqrt{n}\left[\left(w_{in}-\frac{1}{n}\right) \left\{ m\left(O_i;\theta_0,\hat{h}_2\right) -m\left(O_i;\theta_0,h_0\right)\right\}\right]=n(A+B),
\]
where
\begin{align*}
A& =\mathbb{E}_{I_2,\mathbf{W}_n} \left[\text{Var}_{I_1|I_2,\mathbf{W}_n} \sum\limits_{i=1}^n \left[\left(w_{in}-\frac{1}{n}\right) \left(m\left(O_i;\theta_0,\hat{h}_2\right) -m\left(O_i;\theta_0,h_0\right)\right)\right]|I_2,\mathbf{W}_n\right]\\[6pt]
B& =\text{Var}_{I_2,\mathbf{W}_n} \left[\mathbb{E}_{I_1|I_2,\mathbf{W}_n} \sum\limits_{i=1}^n \left[\left(w_{in}-\frac{1}{n}\right) \left(m\left(O_i;\theta_0,\hat{h}_2\right) -m\left(O_i;\theta_0,h_0\right)\right)\right]|I_2,\mathbf{W}_n\right]
\end{align*}
We show that $nA$ goes to $0$  goes to $0$ as $N \to \infty$. 
\begin{align*}
A&=\mathbb{E}_{I_2,\mathbf{W}_n} \left[\text{Var}_{I_1|I_2,\mathbf{W}_n} \sum\limits_{i=1}^n \left[\left(w_{in}-\frac{1}{n}\right) \left(m\left(O_i;\theta_0,\hat{h}_2\right) -m\left(O_i;\theta_0,h_0\right)\right)\right]|I_2,\mathbf{W}_n\right]\\
&=\mathbb{E}_{I_2,\mathbf{W}_n}\sum\limits_{i=1}^n (w_{in}-\frac{1}{n})^2\text{Var}_{I_1|I_2,\mathbf{W}_n} \left\{m\left(O_i;\theta_0,\hat{h}_2\right) -m\left(O_i;\theta_0,h_0\right)\right\}|I_2,\mathbf{W}_n\\
&=\mathbb{E}_{I_2,\mathbf{W}_n}\sum\limits_{i=1}^n (w_{in}-\frac{1}{n})^2\text{Var}_{I_1|I_2} \left\{m\left(O_i;\theta_0,\hat{h}_2\right) -m\left(O_i;\theta_0,h_0\right)\right\}|I_2\\
&=\mathbb{E}_{I_2,\mathbf{W}_n}\sum\limits_{i=1}^n (w_{in}-\frac{1}{n})^2\left[\max_{j=1}^p\text{Var}_{I_1|I_2} \left\{m_j\left(O_i;\theta_0,\hat{h}_2\right) -m_j\left(O_i;\theta_0,h_0\right)\right\}|I_2 \right] \mathbf{1}_{p}\mathbf{1}_p^\top\\
&\leq \mathbb{E}_{I_2,\mathbf{W}_n}\sum\limits_{i=1}^n (w_{in}-\frac{1}{n})^2\left[ \max_{j=1}^p\text{Var}_{I_1|I_2} \left\{m_j\left(O_i;\theta_0,h\right) -m_j\left(O_i;\theta_0,h_0\right)\right\}|I_2\right] \mathbf{1}_{p}\mathbf{1}_{p}^\top \\
%&\leq \mathbb{E}_{I_2,\mathbf{W}_n}\sum\limits_{i=1}^n (w_{in}-\frac{1}{n})^2 \left[\sup_{h \in H} \max_{j=1}^p\text{Var}_{I_1} \left\{m_j\left(O_i,\theta_0,h\right) -m_j\left(O_i,\theta_0,h_0\right)\right\}\right] \mathbf{1}_{p}\mathbf{1}_{p}^T\\
%&\leq \mathbb{E}_{I_2,\mathbf{W}_n}\sum\limits_{i=1}^n (w_{in}-\frac{1}{n})^2 \delta_N^2 \mathbf{1}_{p}\mathbf{1}_{p}^T\\
&\leq \delta_N^2 \sum\limits_{i=1}^n \mathbb{E}_{\mathbf{W}_n}  (w_{in}-\frac{1}{n})^2 \mathbf{1}_{p}\mathbf{1}_{p}^\top\\[6pt]
&\leq \delta_N^2 \sum\limits_{i=1}^n  \text{Var}_{\mathbf{W}_n}w_{in}\mathbf{1}_{p}\mathbf{1}_{p}^\top\\[6pt]
& \leq \frac{C \delta_N^2}{n} \mathbf{1}_{p}\mathbf{1}_{p}^\top
\end{align*}
where we have used the conditional Cauchy-Schwarz inequality to go from the third to the fourth line in bounding $A$. Indeed, let $k \in \left\{1,\ldots,n \right\}$. For any $i,j \in \left\{1,\ldots,p\right\},$
\begin{align*}
&\left[\text{Var}_{I_1|I_2} \left\{m\left(O_k;\theta_0,\hat{h}_2\right) -m\left(O_k;\theta_0,h_0\right)\right\}|I_2 \right]_{ij}\\[6pt]
&=\text{Cov}_{I_1|I_2} \left[\left\{m_i\left(O_k;\theta_0,\hat{h}_2\right) -m_i\left(O_k;\theta_0,h_0\right)\right\},\left\{m_j\left(O_k;\theta_0,\hat{h}_2\right) -m_j\left(O_k;\theta_0,h_0\right)\right\}|I_2 \right] \\[6pt]
&\leq \sqrt{ \text{Var}_{I_1|I_2} \left\{ m_i\left(O_k;\theta_0,\hat{h}_2\right) -m_i\left(O_k;\theta_0,h_0\right)\right\}|I_2} \sqrt{ \text{Var}_{I_1|I_2} \left\{m_j\left(O_k;\theta_0,\hat{h}_2\right) -m_j\left(O_k;\theta_0,h_0\right)\right\}|I_2}\\[6pt]
& \leq \max_{1 \le i \le p }  \text{Var}_{I_1|I_2} \left\{ m_i\left(O_k;\theta_0,\hat{h}_2\right) -m_i\left(O_k;\theta_0,h_0\right)\right\}|I_2\\[6pt]
& \leq \sup\limits_{h \in \mathcal{H}} \max_{1 \le i \le p }  \text{Var}_{I_1|I_2} \left\{ m_i\left(O_k;\theta_0,h\right) -m_i\left(O_k;\theta_0,h_0\right)\right\}|I_2\\[6pt]
&\leq \sup\limits_{h \in \mathcal{H}} \max_{1 \le i \le p }  \text{Var}_{I_1|I_2} \left\{ m_i\left(O_k;\theta_0,h\right) -m_i\left(O_k;\theta_0,h_0\right)\right\}\\[6pt]
& \le \delta_N^2
\end{align*}
Since $A \geq 0$, it follows that $nA \to 0$.

We now look at $B$.
\begin{align*}
B&=\text{Var}_{I_2,\mathbf{W}_n} \left[\mathbb{E}_{I_1|I_2,\mathbf{W}_n} \sum\limits_{i=1}^n \left[\left(w_{in}-\frac{1}{n}\right) \left(m\left(O_i;\theta_0,\hat{h}_2\right) -m\left(O_i;\theta_0,h_0\right)\right)\right]|I_2,\mathbf{W}_n\right]\\
&=\text{Var}_{I_2,\mathbf{W}_n}\sum\limits_{i=1}^n \left[\mathbb{E}_{I_1|I_2,\mathbf{W}_n}\left[\left(w_{in}-\frac{1}{n}\right) \left(m\left(O_i;\theta_0,\hat{h}_2\right) -m\left(O_i;\theta_0,h_0\right)\right)\right]|I_2,\mathbf{W}_n\right]\\
&=\text{Var}_{I_2,\mathbf{W}_n}\sum\limits_{i=1}^n \left(w_{in}-\frac{1}{n}\right) \left[\mathbb{E}_{I_1|I_2,\mathbf{W}_n}\left[\left(m\left(O_i;\theta_0,\hat{h}_2\right) -m\left(O_i;\theta_0,h_0\right)\right)\right]|I_2,\mathbf{W}_n\right]\\
&=\text{Var}_{I_2,\mathbf{W}_n}\sum\limits_{i=1}^n \left(w_{in}-\frac{1}{n}\right) \left[\mathbb{E}_{I_1|I_2}\left[\left(m\left(O_i;\theta_0,\hat{h}_2\right) -m\left(O_i;\theta_0,h_0\right)\right)\right]|I_2\right]\\
&=\text{Var}_{I_2,\mathbf{W}_n}\sum\limits_{i=1}^n \left(w_{in}-\frac{1}{n}\right) \left[\mathbb{E}_{I_1|I_2}\left[\left(m\left(O_1;\theta_0,\hat{h}_2\right) -m\left(O_1;\theta_0,h_0\right)\right)\right]|I_2\right]\\
&=\text{Var}_{I_2,\mathbf{W}_n}\sum\limits_{i=1}^n \left(w_{in}-\frac{1}{n}\right) \left[\mathbb{E}_{O_1|I_2}\left[\left(m\left(O_1;\theta_0,\hat{h}_2\right) -m\left(O_1;\theta_0,h_0\right)\right)\right]|I_2\right]\\
&=\text{Var}_{I_2,\mathbf{W}_n} \left[\mathbb{E}_{O_1|I_2}\left[\left(m\left(O_1;\theta_0,\hat{h}_2\right) -m\left(O_1;\theta_0,h_0\right)\right)\right]|I_2\right]\sum\limits_{i=1}^n \left(w_{in}-\frac{1}{n}\right) \\
&=0_{p \times p}
\end{align*}
Therefore, by Chebychev's inequality, we get the desired result. 
\end{proof}
\begin{proposition}
Let $\mathbf{W}_n=(w_{1n},\ldots,w_{nn})$ be a random vector of non-negative components summing to $1$ such that $\mathbb{E}w_{in}=1/{n}$ and $\sup\limits_{n \in \mathbb{N} } n^2 \mathbb{E}w_{in}^2 < C <\infty$. Then, under Assumption \ref{AssonVarm},
 
 \[
  \sqrt{n} \left[\frac{1}{n}\sum\limits_{i=1}^n\left\{m(O_i;\theta_0,\hat{h}_2)-m(O_i;\theta_0,h_0)\right\}-\mathbb{E}_{P_O}\left\{m\left(O;\theta_0,\hat{h}_2\right)-m(O;\theta_0,h_0)\right\}\right]
  \]
converges to $0$ in probability.
\end{proposition}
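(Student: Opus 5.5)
The plan is to condition on the training fold $I_2$, which makes $\hat{h}_2$ a fixed function, and then apply Chebyshev's inequality. The weights $\mathbf{W}_n$ do not appear in this term, so their hypotheses play no role here. Write
\[
D_i=m(O_i;\theta_0,\hat{h}_2)-m(O_i;\theta_0,h_0), \qquad T_n=\sqrt{n}\left[\frac{1}{n}\sum_{i=1}^n D_i-\mathbb{E}_{P_O}\{D\}\right].
\]
Here $\mathbb{E}_{P_O}\{D\}$ is the expectation over a fresh draw $O$ with $\hat{h}_2$ held fixed, that is, $\mathbb{E}_{O_1|I_2}\{D_1\mid I_2\}$. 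Since $\hat{h}_2$ is a function of $I_2$ alone, and $I_2$ is independent of $I_1$, the $D_i$ for $i\in I_1$ are i.i.d. conditionally on $I_2$. Their common conditional mean is exactly this centring term. It follows that $\mathbb{E}[T_n\mid I_2]=0$, and by the law of total expectation $\mathbb{E}T_n=0$.

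For the variance, I would apply the law of total variance conditioning on $I_2$, as in the proof of the preceding proposition. The between-part
\[
\var\left(\mathbb{E}[T_n\mid I_2]\right)
\]
vanishes identically, because the conditional mean is zero. The within-part is handled by conditional independence:
\[
\var(T_n\mid I_2)=\frac{1}{n}\sum_{i=1}^n \var(D_i\mid I_2)=\var_{O_1|I_2}(D_1\mid I_2).
\]
Working componentwise, each diagonal entry is bounded by the conditional second moment $\mathbb{E}_{O_1|I_2}\{\Vert D_1\Vert_{p,2}^2\mid I_2\}$. On the event $\hat{h}_2\in H_N$, this is at most
\[
\sup_{h\in H_N}\mathbb{E}\Vert m(O;\theta_0,h)-m(O;\theta_0,h_0)\Vert_{p,2}^2\le\delta_N^2
\]
by Assumption \ref{AssonVarm}. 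Off-diagonal entries are controlled by the same bound via Cauchy--Schwarz, as in the previous proof. Hence, on that event, $\mathbb{E}[\Vert T_n\Vert_{p,2}^2\mid I_2]\le p\,\delta_N^2$ up to a constant.

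The main obstacle is that $\hat{h}_2\in H_N$ holds only with probability tending to one, rather than surely. I would therefore not take an unconditional expectation of the variance. Instead I would apply the conditional Chebyshev inequality: for $\epsilon>0$,
\[
P(\Vert T_n\Vert>\epsilon)\le \mathbb{E}\left[\min\left\{1,\frac{p\,\delta_N^2}{\epsilon^2}\right\}\mathbf{1}\{\hat{h}_2\in H_N\}\right]+P(\hat{h}_2\notin H_N).
\]
Both terms tend to zero as $N\to\infty$. This gives $T_n\to 0$ in probability unconditionally. If $\hat{h}_2\in H_N$ almost surely, as the paper's convention suggests, the second term is simply absent and the plain Chebyshev argument suffices.
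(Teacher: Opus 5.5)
Your proposal is correct and takes essentially the same route as the paper. Both condition on $I_2$ so that the summands are conditionally i.i.d.\ with conditional mean equal to the centring term, observe that the between-part of the total variance vanishes, bound the within-part by $\delta_N^2$ via Assumption~\ref{AssonVarm}, and conclude with Chebyshev. Your conditional Chebyshev step with the extra term $P(\hat{h}_2\notin H_N)$ is a small refinement over the paper, which implicitly takes $\hat{h}_2\in H_N$ surely.
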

\begin{proof}
As in the previous proposition, we look at the expected value and variance of that term.
\begin{align*}
&\mathbb{E}_{O_1:O_{2n}} \left[\frac{1}{n}\sum\limits_{i=1}^n\left\{m(O_i;\theta_0,\hat{h}_2)-m(O_i;\theta_0,h_0)\right\}-\mathbb{E}_{P_O}\left\{m\left(O;\theta_0,\hat{h}_2\right)-m(O;\theta_0,h_0)\right\}\right]\\
&=\mathbb{E}_{O_{1}:O_{2n}}\left[\frac{1}{n}\sum\limits_{i=1}^n\left\{m(O_i;\theta_0,\hat{h}_2)-m(O_i;\theta_0,h_0)\right\}\right]-\mathbb{E}_{P_O,I_2}\left\{m\left(O;\theta_0,\hat{h}_2\right)-m(O;\theta_0,h_0)\right\}\\
&=\mathbb{E}_{I_2}\mathbb{E}_{I_1|I_2}\left[\frac{1}{n}\sum\limits_{i=1}^n\left\{m(O_i;\theta_0,\hat{h}_2)-m(O_i;\theta_0,h_0)\right\}|I_2\right]-\mathbb{E}_{P_O}\left\{m\left(O;\theta_0,\hat{h}_2\right)-m(O;\theta_0,h_0)\right\}\\
&=\mathbb{E}_{I_2} \mathbb{E}_{I_1|I_2} m(O_1;\theta_0,\hat{h}_2)|I_2 -\mathbb{E}_{P_{O},I_2}\left\{m\left(O;\theta_0,\hat{h}_2\right)-m(O;\theta_0,h_0)\right\}\\[6pt]
&=0.
\end{align*}
We now look at the following variance and use the total variance formula:\\
\[
\text{Var}_{O_1: O_{2n}} \sqrt{n}\left[\frac{1}{n}\sum\limits_{i=1}^n\left\{m(O_i;\theta_0,\hat{h}_2)-m(O_i;\theta_0,h_0)\right\}-\mathbb{E}_{P_O}\left\{m\left(O;\theta_0,\hat{h}_2\right)-m(O;\theta_0,h_0)\right\}\right]=n(C+D)
\]
where
\[
C= \mathbb{E}_{I_2} \text{Var}_{I_1|I_2} \left[ \frac{1}{n}\sum\limits_{i=1}^n\left\{m(O_i;\theta_0,\hat{h}_2)-m(O_i;\theta_0,h_0)\right\}-\mathbb{E}_{P_O}\left\{m\left(O;\theta_0,\hat{h}_2\right)-m(O;\theta_0,h_0)\right\}|I_2\right],
\]
and
\[
D=\text{Var}_{I_2}\mathbb{E}_{I_1|I_2} \left[ \frac{1}{n}\sum\limits_{i=1}^n\left\{m(O_i;\theta_0,\hat{h}_2)-m(O_i;\theta_0,h_0)\right\}-\mathbb{E}_{P_O}\left\{m\left(O;\theta_0,\hat{h}_2\right)-m(O;\theta_0,h_0)\right\}|I_2\right].
\]
Now
\begin{align*}
C&= \mathbb{E}_{I_2} \text{Var}_{I_1|I_2} \left[ \frac{1}{n}\sum\limits_{i=1}^n\left\{m(O_i;\theta_0,\hat{h}_2)-m(O_i;\theta_0,h_0)\right\}-\mathbb{E}_{P_O}\left\{m\left(O;\theta_0,\hat{h}_2\right)-m(O;\theta_0,h_0)\right\}|I_2\right] \\
&=\mathbb{E}_{I_2} \text{Var}_{I_1|I_2}\left[ \frac{1}{n}\sum\limits_{i=1}^n\left\{m(O_i;\theta_0,\hat{h}_2)-m(O_i;\theta_0,h_0)\right\}|I_2\right]\\
&=\frac{1}{n}\mathbb{E}_{I_2}\text{Var}_{I_1|I_2} \left[m(O_1;\theta_0,\hat{h}_2)-m(O_1;\theta_0,h_0)|I_2\right]\\[6pt]
&\leq \frac{\delta^2_N}{n}\mathbf{1}_p\mathbf{1}_p^\top
\end{align*}

and 
\begin{align*}
D&=\text{Var}_{I_2}\mathbb{E}_{I_1|I_2} \left[ \frac{1}{n}\sum\limits_{i=1}^n\left\{m(O_i;\theta_0,\hat{h}_2)-m(O_i;\theta_0,h_0)\right\}-\mathbb{E}_{P_O}\left\{m\left(O;\theta_0,\hat{h}_2\right)-m(O;\theta_0,h_0)\right\}|I_2\right]\\
&=\text{Var}_{I_2}\mathbb{E}_{I_1|I_2} \left[ \frac{1}{n}\sum\limits_{i=1}^n\left\{m(O_i;\theta_0,\hat{h}_2)\right\}-\mathbb{E}_{P_O}\left\{m\left(O;\theta_0,\hat{h}_2\right)\right\}|I_2\right]\\
&=\text{Var}_{I_2}\left[\left\{\mathbb{E}_{I_1|I_2} m(O_1;\theta_0,\hat{h}_2)|I_2\right\}-\mathbb{E}_{O_1|I_2} m(O_1;\theta_0,\hat{h}_2)|I_2\right]\\[6pt]
&=0_{p \times p}
\end{align*}
which then implies the desired result that 
  \[
  \sqrt{n}\left[ \frac{1}{n}\sum\limits_{i=1}^n\left\{m(O_i;\theta_0,\hat{h}_2)-m(O_i;\theta_0,h_0)\right\}-\mathbb{E}_{P_O}\left\{m\left(O;\theta_0,\hat{h}_2\right)-m(O;\theta_0,h_0)\right\}\right]
  \]
 converges to $0$ in probability.\\
 \end{proof}

\section{Sample-splitting with replicate splits: reporting the posterior} \label{sec:RepSplits}
There are a number of ways the sample-splitting algorithm can be deployed to report the best representation of the posterior distribution.  Algorithm \ref{BBSampleSplittingIndepWeights} provides one version that relies upon averaging across splits. Since the fits are based on subsamples obtained by randomly dividing the data, we may also follow the recommendation of \citet{Chernozhukov/etal:2018} to minimize the effect of a particular random split on the estimation. We repeat the estimation for an odd number of times $n_{\text{reps}}$, to obtain estimates $\theta_1,\ldots,\theta_{n_{\text{reps}}}$ of the parameter of interest, which is a scalar in our case. We then report the median $\theta_{\text{med}}$ and plot the posterior for this particular choice of subsamples. Indeed, although the choice of the splits is not significant asymptotically, the procedure discussed in \citet{Chernozhukov/etal:2018} and in Algorithm \ref{FiniteSamplePosteriorSS} reduces the effect of a particular random split in finite samples. Concretely, we implement the following algorithm: 
\begin{algo}{\label{FiniteSamplePosteriorSS}}

Sample-splitting with replicated splits:

    \begin{enumerate}
        \item Choose an odd number $n_{\text{reps}}$ and a number $n_{\text{samp}}$ of posterior samples.
        \item For each $i$ between $1$ and $n_{\text{reps}}$:
              \begin{enumerate}
                  \item Split the data into two subsamples $I_1^{(i)}$ and $I_2^{(i)}$ of the same size.
                  \item For each $j$ from $1$ to $2$,
                        \begin{enumerate}
                            \item Exclude $I_j^{(i)}$ and obtain $\hat{h}_{-j}$ on the remaining  observations.
                            \item Test the models on $I_j^{(i)}$ and get the fitted values $\left\{\hat{h}_{-j}\left(O_s\right)\right\}_{s \in I_j^{(i)}}$.
                            \item Compute $\displaystyle{\hat{\theta}^{(i)}_{j}=-\frac{\sum\limits_{i \in I_j}B(O_i,\hat{h}_{-j}) }{\sum\limits_{i \in I_j}A(O_i,\hat{h}_{-j})}}$ 
                        \end{enumerate}
                  \item Compute $\hat{\theta}_{i}=(\hat{\theta}_1^{(i)}+\hat{\theta}^{(i)}_2)/2$
              \end{enumerate}
        \item Identify $i_{\text{med}}$ such that the median of $\hat{\theta}_1,\ldots,\hat{\theta}_{n_{\text{reps}}}$ is equal to $\hat{\theta}_{i_{\text{med}}}$. 
        \item Report the posterior for $\theta$ based on the subsamples $I_1^{(i_{\text{med}})}$ and  $I_2^{(i_{\text{med}})}$ using $n_{\text{samp}}$ Bayesian bootstrap samples.
    \end{enumerate}
\end{algo}

\bibliographystyle{biometrika}
\bibliography{references}
\end{document}